\documentclass[a4paper,11pt]{article}

\usepackage{amsmath,amsthm,array}
\usepackage{amssymb}
\usepackage{mathtools}
\usepackage{enumerate}
\usepackage[margin=3cm]{geometry}
 
\usepackage[utf8]{inputenc}
\usepackage{tgtermes}
\usepackage{mathptmx}
\usepackage[T1]{fontenc}
\usepackage{stackrel}
\usepackage{pict2e,picture}

\usepackage{hyperref}
\hypersetup{colorlinks=true}

%
%
%

\makeatletter
\newcommand{\pnrelbar}{%
  \linethickness{\dimen2}%
  \sbox\z@{$\m@th\prec$}%
  \dimen@=1.1\ht\z@
  \begin{picture}(\dimen@,.4ex)
  \roundcap
  \put(0,.2ex){\line(1,0){\dimen@}}
  \put(\dimexpr 0.5\dimen@-.2ex\relax,0){\line(1,1){.4ex}}
  \end{picture}%
}

\newcommand{\precneq}{\mathrel{\vcenter{\hbox{\text{\prec@neq}}}}}
\newcommand{\prec@neq}{%
  \dimen2=\f@size\dimexpr.04pt\relax
  \oalign{%
    \noalign{\kern\dimexpr.2ex-.5\dimen2\relax}
    $\m@th\prec$\cr
    \noalign{\kern-.5\dimen2}
    \hidewidth\pnrelbar\hidewidth\cr
  }%
}
\makeatother

\newtheorem{Theorem}{Theorem}[section]
\newtheorem{Lemma}[Theorem]{Lemma}
\newtheorem{Proposition}[Theorem]{Proposition}
\newtheorem{Corollary}[Theorem]{Corollary}
\newtheorem{Definition}[Theorem]{Definition}

\theoremstyle{definition}
\newtheorem{Example}[Theorem]{Example}

\newtheorem{Remark}[Theorem]{Remark}

\newcommand{\CC}{\mathbb{C}}

\newcommand{\ZZ}{\mathbb{Z}}
\newcommand{\NN}{\mathbb{N}}
\newcommand{\QQ}{\mathbb{Q}}

\newcommand{\Hom}{\operatorname{Hom}}

\newcommand{\Mat}{\operatorname{Mat}}

\newcommand{\Spec}{\operatorname{Spec}}
\newcommand{\ad}{\operatorname{ad}}

\newcommand{\Span}{\operatorname{Span}}
\newcommand{\op }{{\operatorname{o}}}
\newcommand{\Ann}{\operatorname{Ann}}

\newcommand{\Ker}{\operatorname{Ker}}
\newcommand{\Gr}{{\operatorname{Gr}}}

\newcommand{\supp}{\operatorname{supp}}
\newcommand{\lrk}{{\operatorname{lrk}}}
\newcommand{\rrk}{{\operatorname{rrk}}}

\newcommand{\lt}{\operatorname{lt}}
\newcommand{\lm}{\operatorname{lm}}
\newcommand{\lc}{\operatorname{lc}}

\newcommand{\leftbimod}[3]{\vphantom{#1}^{#2}{\kern-#3pt #1}}

\renewcommand{\>}{\rangle} 
     
\numberwithin{equation}{subsection}
  

\title{The graph of a Weyl algebra endomorphism}

\author{Niels Lauritzen and Jesper Funch Thomsen} 
\begin{document}
\maketitle 

\begin{abstract}

Endomorphisms of Weyl algebras are studied
using bimodules. Initially,
for a Weyl algebra over a field of characteristic zero,
Bernstein's inequality implies that holonomic bimodules 
finitely generated from the right or left form 
a monoidal category.

The most
important bimodule in this paper is the graph of an
endomorphism. 
We prove that the graph of an endomorphism of a Weyl algebra
over a field of characteristic zero is a simple bimodule. 
The simplicity of the tensor product of the dual graph
and the graph is equivalent to the Dixmier conjecture.

It is also shown how the graph construction leads to a non-commutative 
Gr\"obner basis algorithm for detecting invertibility of
an endomorphism  for Weyl algebras and computing the inverse over arbitrary fields. 
\end{abstract}

\section*{Introduction}

Let $X=\Spec_k(R)$ and
$Y=\Spec_k(S)$ denote affine $k$-schemes, where $k$ is a field and $R$ and $S$
commutative $k$-algebras.
The graph of a $k$-morphism $\varphi:X\rightarrow Y$ is the
closed subscheme in $X\times_k Y\cong \Spec(S\otimes_k R)$ given by
the ideal $J$ generated by
$
a \otimes 1 - 1 \otimes \varphi^*(a)
$
for $a\in S$, where the $k$-algebra homomorphism
$\varphi^*:S\rightarrow R$ defines $\varphi$. The $S\otimes_k R$-module
$S\otimes_k R/J$ identifies with the $R$--$S$ bimodule $R$ with
the natural left multiplication, but with right multiplication
induced by $\varphi^*$.

This observation in commutative affine geometry is the basis of
an interpretation of the graph in the non-commutative case:
the graph of a homomorphism $f: S\rightarrow T$ of general $k$-algebras $S$ and $T$ is 
the $T$--$S$ bimodule $T^f$ defined by
$t x s = t x f(s)$, where $t, x\in T$ and $s\in S$.
The graph has several appealing properties e.g.,~it is finitely generated from the left and a homomorphism
is invertible if and only if its graph is invertible as
a bimodule. 
  
We first give some general properties of bimodules over Weyl 
algebras. Let $A:=A_n(k)$ 
be a Weyl algebra over a field $k$ of characteristic zero and
$M$  a bimodule over $A$. Then we prove that
$M$ is holonomic as a bimodule i.e., as a left module over
the enveloping algebra $A^e\cong A_{2n}(k)$ if it is 
finitely generated from the left or right. If $N$ is
a holonomic bimodule and $M$ is finitely generated from the right, we
show that $M\otimes_A N$ is a holonomic bimodule. 
Introducing the \emph{left} and \emph{right rank} of a holonomic bimodule
$M$ over $A$, we show that the dual bimodule $\Hom(M, A)$ is holonomic. 
These introductory results are 
basically all proved using Bernstein's inequality.

For an endomorphism $f:A\rightarrow A$ we show by yet
another invocation of Bernstein's inequality that the graph $A^f$  is a simple
bimodule. This prompts a question for general algebras $B$: if $S\subset B$ is a subalgebra
and $B$ is a simple $B$--$S$ bimodule, is $S = B$? If $B$ is 
the matrix algebra over an algebraically closed field, this question 
has an affirmative answer as
a consequence of Burnside's classical theorem. A generalization to infinite
dimensional algebras is not immediate.
For the Weyl algebra $A=A_1(\CC)$, 
there are several examples of (intricate) proper subalgebras $S$, such that
$A$ is a simple $A$--$S$ bimodule. 

The tensor product $\Hom(A^f, A)\otimes_A A^f$ of the dual graph and the graph, is isomorphic to the bimodule $\leftbimod{A^f}{f}{3}$ introduced
by Bavula in \cite{Bavula}. Bavula's result on holonomicity of $\leftbimod{A^f}{f}{3}$ is put
in the framework of tensor products of bimodules. Finite generation of the graph $A^f$ from the right is equivalent to the Dixmier conjecture \cite{LT2}.

Finally, using the graph construction, 
a Gr\"obner basis algorithm emerges for detecting whether an
endomorphism of $A_n(k)$ is an automorphism,
where $k$ is a general field.

In the non-commutative case, $S$-polynomials of differential operators
with relatively prime initial terms do not necessarily reduce to zero.
From the bimodule setting we encounter a sequence of
commuting differential operators in $A_{2n}(k)$ with pairwise relatively prime initial terms.
In this situation reduction to zero from the commutative setting generalizes 
rather easily. To complete the proof of the validity of the algorithm we need that
a surjective endomorphism of a Weyl algebra is an automorphism. This is well
known for fields of characteristic zero. For fields of 
positive characteristic this invokes reduction to
the center.

Even though it is not completely obvious from the current version, this paper was inspired by computer
calculations with Gr\"obner bases in the Weyl algebra.
Experiments with \textit{Macaulay2} \cite{M2} 
originally indicated to us some time ago, that the left ideal 
$(x_2 - Q, \partial_2 + P)$ in the second Weyl algebra 
$A_2(\QQ)$ in the variables $x_1, x_2, \partial_1, \partial_2$ with $P, Q$ in the subalgebra generated by $x_1$ and $\partial_1$, is proper
if and only if $[P, Q] = 1$ (an analogous statement in positive
characteristic is false). It only gradually became clear to to us that the proper framework
for explaining this phenomenon could be found in bimodules and
Bernstein's inequality.

We acknowledge valuable discussions with J.~C.~Jantzen and 
S.~J{\o}ndrup.

\section{Preliminaries}

Let $k$ be a field and $A$ a $k$-algebra.
Our focus in this paper is endomorphisms of $k$-algebras and
for the sake of simplicity we will only consider bimodules over
$A$ in this paper.

\subsection{Bimodules over a ring}

A \emph{bimodule} over $A$ is a $k$-vector space
$M$ with (compatible) left and right $A$-module structures, such that
$$
(a_1 m) a_2 = a_1 (m a_2),
$$
where $m\in M$ and $a_1, a_2\in A$. 
We denote by ${}_AM$, the bimodule
viewed as a left module over $A$ and similarly $M_A$ for $M$ viewed
as a right module over  $A$.
A \emph{homomorphism} $f: M \rightarrow
N$ of bimodules $M$ and $N$ over $A$ is a vector space
homomorphism, which is a homomorphism of left and right modules.

Let $A^\op$ denote the opposite ring to $A$.
Bimodules over $A$ and their homomorphisms form an 
abelian category equivalent to the category of
left modules over the enveloping algebra 
$A^e := A\otimes_k A^\op$. Here a bimodule $M$ over $A$ becomes a
left $A^e$-module through $(a\otimes b) m = a m b$ for $m\in M$ and
$a, b\in A$.
Let $M$ and $N$ be bimodules over $A$. Then
$$
M\otimes N := M_A \otimes_{{}_A} {}_A N
$$
and
$$
\Hom(M, N) := \Hom_A({}_AM, {}_A N) 
$$
are bimodules over $A$ (the latter one through $(a f s)(x) = f(x a) s$). 
A bimodule $P$ over $A$ is called invertible if $-\otimes_A P$ is
an autoequivalence of the category of left $A$-modules. The invertibility
of $P$ is equivalent to the existence of a bimodule $Q$, such that
$$
P\otimes Q \cong A\qquad\text{and}\qquad Q\otimes P \cong A.
$$
In this case $Q\cong \Hom(P, A)$.

For a $k$-algebra endomorphism $f:A\rightarrow A$ and a bimodule $M$ over $A$, we let
$M^f$ denote the $A$-bimodule $M$ with its right module structure twisted by $f$ i.e.,
$a m s = a m f(s)$ for $a, s\in A$ and $m\in M$. Similarly ${}^f M$ denotes
the bimodule $M$ with the left module structure twisted by $f$ i.e., 
$a m s = f(a) m s$ for $a, s\in A$ and $m\in M$. The \emph{graph} of $f$ 
is defined as $A^f$.

\begin{Proposition}\label{Proposition:graphinv}
Let $f:A\rightarrow A$ be an endomorphism and $M$ a bimodule over $A$. Then
\begin{enumerate}[(a)]
\item
$$
M\otimes A^f \cong M^f
$$
\item
$$
\Hom(A^f, A) \cong \leftbimod{A}{f}{3}
$$
\item
$f$ is invertible if and only if $A^f$ is invertible. In this case
$$
A^{f^{-1}}\cong \leftbimod{A}{f}{3}
$$
\item\label{graph:ann}
If $f$ is invertible, then
$$
\Ann_{A^e} 1_{\leftbimod{A}{f^{-1}}{3}} = \Ann_{A^e} 1_{A^f}.
$$
\end{enumerate}
\end{Proposition}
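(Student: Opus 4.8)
The plan is to compute the two annihilators explicitly as subsets of $A^e = A\otimes_k A^\op$ and then observe that the automorphism $f^{-1}$ carries one onto the other. Recall that an element $\omega = \sum_i a_i\otimes b_i$ of $A^e$ acts on a bimodule $M$ by $\omega\cdot m = \sum_i a_i m b_i$. In the graph $A^f$ the right action is twisted by $f$, so $\omega\cdot 1_{A^f} = \sum_i a_i f(b_i)$, the product being taken in $A$; hence
$$
\Ann_{A^e} 1_{A^f} = \Bigl\{\, \sum\nolimits_i a_i\otimes b_i \in A^e \;:\; \sum\nolimits_i a_i f(b_i) = 0 \,\Bigr\}.
$$

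Dually, in $\leftbimod{A}{f^{-1}}{3}$ the left action is twisted by $f^{-1}$, so $\omega\cdot 1_{\leftbimod{A}{f^{-1}}{3}} = \sum_i f^{-1}(a_i)\,b_i$, and therefore
$$
\Ann_{A^e} 1_{\leftbimod{A}{f^{-1}}{3}} = \Bigl\{\, \sum\nolimits_i a_i\otimes b_i \in A^e \;:\; \sum\nolimits_i f^{-1}(a_i)\,b_i = 0 \,\Bigr\}.
$$
Since $f$ is invertible it is a $k$-algebra automorphism of $A$, so $f^{-1}$ is a bijective $k$-algebra homomorphism. Applying $f^{-1}$ to the relation $\sum_i a_i f(b_i) = 0$ gives $\sum_i f^{-1}(a_i)\,b_i = 0$, and applying $f$ yields the reverse implication; hence the two displayed sets coincide, which is the assertion.

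Equivalently, this is nothing but part (c) applied to $f^{-1}$ in place of $f$ (using $(f^{-1})^{-1} = f$): one checks that $x\mapsto f^{-1}(x)$ furnishes an $A^e$-module isomorphism $A^f \cong \leftbimod{A}{f^{-1}}{3}$, and it sends $1_{A^f}$ to $1_{\leftbimod{A}{f^{-1}}{3}}$, so the annihilators of these two elements agree. I do not expect any genuine obstacle here; the only thing requiring care is the correct bookkeeping of the twisted left and right actions, together with the remark that $1$ is in each case the distinguished cyclic generator of the bimodule, so that equality of the annihilators of these particular elements is the meaningful statement.
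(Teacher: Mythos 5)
Your argument for part (d) is correct and is essentially the paper's own: both reduce the claim to the equivalence $a_1 f(b_1)+\cdots+a_m f(b_m)=0 \iff f^{-1}(a_1)b_1+\cdots+f^{-1}(a_m)b_m=0$, obtained by applying $f^{-1}$ (resp.\ $f$), after unwinding the twisted $A^e$-actions exactly as you do. Your second, ``equivalent'' route — the bimodule isomorphism $x\mapsto f^{-1}(x)$ from $A^f$ to $\leftbimod{A}{f^{-1}}{3}$ carrying $1$ to $1$ — is the paper's proof of (c) (which exhibits $f\colon A^{f^{-1}}\to\leftbimod{A}{f}{3}$ as a bimodule isomorphism) applied to $f^{-1}$, so it is the same idea in different clothing.

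The only defect is one of scope: the proposition also asserts (a), (b) and (c), and your proposal proves none of them. Each is a short verification — for (a) the multiplication map $m\otimes a\mapsto ma$; for (b) evaluation $\varphi\mapsto\varphi(1)$, checked against the twisted actions via $(a\varphi s)(1)=\varphi(a)s=f(a)\varphi(1)s$; for (c) the isomorphism $A^{f^{-1}}\cong\leftbimod{A}{f}{3}$ combined with (a) and (b) to produce the inverse bimodule — but as written your attempt establishes only part (d).
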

\begin{proof}
For $(a)$, the natural map $M\otimes A^f \cong M^f$ given by $m\otimes a\mapsto m a$ is
an isomorphism of bimodules. 
In $(b)$, define $\Hom_A(A^f, A)\rightarrow \leftbimod{A}{f}{2}$ by
$\varphi\mapsto \varphi(1)$. This is an isomorphism of bimodules, since
$(a \varphi s)(1) = \varphi(1 a) s = \varphi(f(a)) s = f(a) \varphi(1) s$.
In $(c)$, $f$ is an isomorphism of bimodules $A^{f^{-1}}\rightarrow {}^f A$.
The identity in $(d)$ follows from
$$
a_1 f(b_1) + \cdots + a_m f(b_m) = 0 \iff
f^{-1}(a_1) b_1 + \cdots + f^{-1}(a_m) b_m = 0,
$$
where $a_1, b_1, \cdots, a_m, b_m\in A$.
\end{proof}

\subsection{The Weyl algebra}

We briefly summarize relevant properties of the Weyl algebra. For proofs and
further details we refer to the monograph \cite[Chapter 1]{Bjork} and the textbook
\cite{Coutinho}.

\subsubsection{Arbitrary characteristic}

Let $k$ be a field of arbitrary characteristic.
The Weyl algebra $A_n(k)$ of order $n$ over $k$ is the free algebra
on $\partial_1, \dots,\partial_n, x_1, \dots, x_n$ with relations
\begin{align}\label{commrules}
\begin{split}
[x_i, x_j] &= 0\\
[\partial_i, \partial_j] &= 0\\
[\partial_i, x_j] &= \delta_{ij}
\end{split}
\end{align}
for $i, j = 1, \dots, n$. 
\begin{Proposition}\label{Proposition:wbasis}
The set 
\begin{equation}\label{wbasis}
M=\{x^\alpha \partial^\beta \mid \alpha, \beta\in \NN^n\}
\end{equation}
is a vector space basis of $A_n(k)$ over $k$, where
$x^\alpha = x_1^{\alpha_1} \cdots x_n^{\alpha_n}$ and $\partial^\beta = \partial_1^{\beta_1} \cdots \partial_n^{\beta_n}$ for
$\alpha = (\alpha_1, \dots, \alpha_n)$ and 
$\beta = (\beta_1, \dots, \beta_n)$in $\NN^n$.
\end{Proposition}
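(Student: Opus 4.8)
The plan is to prove two things: that the set $M$ spans $A_n(k)$ as a $k$-vector space, and that $M$ is linearly independent over $k$.

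Spanning is the easy direction, and I would handle it by a normal-ordering induction. Since $A_n(k)$ is generated as a $k$-algebra by $x_1,\dots,x_n,\partial_1,\dots,\partial_n$, it is spanned by all words in these symbols, so it suffices to rewrite an arbitrary word as a $k$-linear combination of monomials from $M$. Whenever a word contains a $\partial_i$ immediately to the left of an $x_j$, apply $\partial_i x_j = x_j\partial_i + \delta_{ij}$; one of the resulting words is strictly shorter and the other has the same length but one fewer occurrence of a $\partial$ standing to the left of an $x$. Using in addition $[x_i,x_j]=0$ and $[\partial_i,\partial_j]=0$ to sort the $x$'s among themselves and the $\partial$'s among themselves, every word is reduced, after finitely many steps with respect to an evident well-founded measure on words, to a combination of elements of $M$. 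This argument is routine.

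For linear independence the only subtlety is the choice of representation. In characteristic zero one lets $A_n(k)$ act on $k[x_1,\dots,x_n]$ by multiplication and differentiation; this action is faithful and settles that case. In positive characteristic $p$, however, this representation is \emph{not} faithful (for instance $\partial_i^p$ acts as $0$), so a dependence among the monomials could go undetected. I would therefore work instead with the larger module $V=k[x_1,\dots,x_n,y_1,\dots,y_n]$, letting $x_i$ act as multiplication by $x_i$ and $\partial_i$ act as $\tfrac{\partial}{\partial x_i}$ plus multiplication by $y_i$. Checking that these operators satisfy \eqref{commrules} is a short computation, the only interesting relation being $[\partial_i,x_i]=1$, which reduces to $[\tfrac{\partial}{\partial x_i},x_i]=1$; hence $V$ becomes a left $A_n(k)$-module. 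Since $\partial^\beta$ sends $1\in V$ to $y^\beta$ and then $x^\alpha$ sends $y^\beta$ to $x^\alpha y^\beta$, the monomial $x^\alpha\partial^\beta$ carries $1$ to $x^\alpha y^\beta$. Consequently, if $\sum_{\alpha,\beta}c_{\alpha\beta}\,x^\alpha\partial^\beta=0$ in $A_n(k)$, evaluating at $1\in V$ yields $\sum_{\alpha,\beta}c_{\alpha\beta}\,x^\alpha y^\beta=0$ in the polynomial ring $V$, forcing every $c_{\alpha\beta}=0$.

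I expect the main obstacle to be precisely the point just made: noticing that the naive polynomial representation is inadequate in positive characteristic and producing the faithful module $V$, which one can motivate by normal-ordering the products $x_i\,x^\alpha\partial^\beta$ and $\partial_i\,x^\alpha\partial^\beta$. An alternative is to pass to the associated graded ring of $A_n(k)$ under the order filtration and identify it with a commutative polynomial ring; but showing that this graded ring is the \emph{full} polynomial ring $k[x,\xi]$ rather than a proper quotient comes down to exhibiting the same kind of faithful module, so there is no real shortcut. Everything beyond this is bookkeeping.
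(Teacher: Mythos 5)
Your argument is correct and complete. The paper itself does not prove this proposition; it simply cites Gelfand--Kirillov \cite[\S2, Lemma 3]{GK}, so you are supplying a genuine self-contained proof where the authors defer to the literature. Your two steps are the standard ones: spanning by normal-ordering with a well-founded measure (word length, then the number of $\partial$-before-$x$ inversions), and linear independence via a faithful module. You are right to flag that the naive action on $k[x_1,\dots,x_n]$ fails in characteristic $p$ (where $\partial_i^p$ acts as zero), and this matters here because the proposition sits in the ``arbitrary characteristic'' part of the paper and is used later for Gr\"obner bases over general fields. Your fix --- letting $\partial_i$ act on $k[x_1,\dots,x_n,y_1,\dots,y_n]$ as $\tfrac{\partial}{\partial x_i}+y_i$, so that $x^\alpha\partial^\beta$ sends $1$ to $x^\alpha y^\beta$ --- verifies the relations \eqref{commrules} and exhibits the images of the monomials as a linearly independent family, which settles independence in all characteristics. (This module is essentially the left regular representation of the would-be basis, which is why it works.) Nothing is missing; the proof can stand as written.
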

\begin{proof}
See \cite[\S2, Lemma 3]{GK}
\end{proof}

The degree
of \emph{a monomial} $x^\alpha \partial^\beta\in M$ is $|\alpha| + |\beta|$, where
$|\alpha| = \alpha_1 + \cdots + \alpha_n$ and
$|\beta| = \beta_1 + \cdots + \beta_n$
with $\alpha = (\alpha_1, \dots, \alpha_n)$ and $\beta = (\beta_1, \dots, \beta_n)$ both in $\NN^n$.
The \emph{degree}, $\deg(f)$, of $f\in A_n(k)\setminus\{0\}$ is the maximum degree
of the monomials occuring with non-zero coefficient in the expansion of
$f$ in the 
basis \eqref{wbasis}. For $f, g\in A_n(k)\setminus\{0\}$, $\deg(f g) = \deg(f) + \deg(g)$.

The map $\alpha:A_n(k)\otimes_k A_n(k)\rightarrow A_{2n}(k)$
given by 
\begin{align}
\begin{split}\label{double}
\alpha(x_i\otimes 1) &= x_i,\qquad \alpha(1\otimes x_i) = x_{n+i}\\ 
\alpha(\partial_i\otimes 1) &= \partial_i, \qquad\alpha(1\otimes \partial_i) = \partial_{n+i}
\end{split}
\end{align} 
for $i = 1, \dots, n$, is an isomorphism of $k$-algebras.

The map $\tau:A_n(k)\rightarrow A_n(k)$ given
by 
\begin{equation}\label{transp}
\tau(x_i) = x_i,\qquad\tau(\partial_i) = -\partial_i
\end{equation}
for $i = 1, \dots, n$ defines an isomorphism
$A_n(k) \rightarrow A_n(k)^\op$. 
Combined, \eqref{double} and \eqref{transp} give an equivalence
between $A_n(k)$-bimodules and left $A_n(k)^e = A_n(k)\otimes_k A_n(k)^\op \cong A_{2n}(k)$-modules.

\subsubsection{Characteristic zero}

Let $k$ be a field of characteristic zero. The increasing sequence $k = B^n_0 \subset B^n_1 \subset \cdots$
of finite dimensional subspaces given by
$$
B^n_m = \Span_k \left\{x^\alpha \partial^\beta \bigm| \deg(x^\alpha \partial^\beta) \leq m\right\} \subset A_n(k)
$$
is called \emph{the Bernstein filtration} of $A_n(k)$. Clearly $\cup_i B^n_i = A_n(k)$ and $B^n_i B^n_j \subset B^n_{i+j}$ for $i, j\in \NN$. Furthermore, 
$\Gr_B(A_n(k)) = B^n_0 \oplus B^n_1/B^n_0 \oplus \cdots $ is the commutative polynomial ring over $k$ in the $2n$ variables
$[x_1], \dots, [x_n], [\partial_1], \dots, [\partial_n]\in B^n_1/B^n_0$.

A \emph{filtration} $\Gamma$ of a left module $M$ over $A_n(k)$,
is an increasing sequence $\Gamma_0 \subset \Gamma_1 \subset \cdots$ of
finite dimensional subspaces of $M$, such that
$\cup_i \Gamma_i = M$ and 
$B^n_i \Gamma_j \subset \Gamma_{i+j}$ for $i, j\in \NN$. 

Such a filtration is called \emph{good} if  $\Gr_\Gamma (M) = \Gamma_0 \oplus
\Gamma_1/\Gamma_0\oplus \cdots $ is finitely generated as
a $\Gr_B(A_n(k))$-module. 
A left module with a good filtration is finitely
generated. If $M$ is finitely generated by $m_1, \dots, m_r\in M$, then 
$\Gamma_i = B^n_i m_1 + \cdots + B^n_i m_r$ is a good filtration of $M$.

For a good filtration $\Gamma$ of a left module $M$,
there exists a polynomial $p = \frac{e}{d!} x^d + \cdots \in \QQ[x]$ with $e\in \NN$, such that
$$
\dim_k \Gamma_i = p(i)\qquad\text{for $i\gg 0$}.
$$
The degree $d$ and the leading coefficient of this polynomial are 
independent of the good filtration of $M$. For a module $M$ with a
good filtration, we denote $\dim(M):=d$ the \emph{dimension} of $M$ and
$e(M):=e$ the \emph{multiplicity} of $M$. 
The following important result is called 
\emph{Bernstein's inequality}.

\begin{Theorem}[Bernstein]\label{Bernstein}
If $M$ is a finitely generated non-zero left module over $A_n(k)$, then $\dim(M)\geq n$.
\end{Theorem}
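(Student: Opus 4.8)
The plan is to run Bernstein's classical argument: convert the lower bound on $\dim(M)$ into an injectivity statement about how the Bernstein filtration of $A_n(k)$ acts on a good filtration of $M$, and then read off the bound by comparing degrees of Hilbert polynomials. So I would fix a good filtration $\Gamma_0\subset\Gamma_1\subset\cdots$ of $M$; after discarding an initial segment of it (which affects neither goodness nor $\dim(M)$, nor the polynomial $p$ up to a shift of argument) I may assume $\Gamma_0\neq 0$, hence $\Gamma_j\neq 0$ for all $j$.

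The core step is the claim that for every $j$ the $k$-linear map
$$
\varphi_j\colon B^n_j \longrightarrow \Hom_k(\Gamma_j,\Gamma_{2j}),\qquad P\longmapsto\bigl(v\mapsto Pv\bigr),
$$
is injective, which I would prove by induction on $j$. The case $j=0$ is clear since $B^n_0=k$ and $\Gamma_0\neq 0$. For the inductive step, suppose $0\neq P\in B^n_j$ satisfies $P\Gamma_j=0$. Then $P$ annihilates $\Gamma_{j-1}$ and also $x_i\Gamma_{j-1}$ and $\partial_i\Gamma_{j-1}$ (both contained in $\Gamma_j$), so the commutators $[x_i,P]$ and $[\partial_i,P]$ annihilate $\Gamma_{j-1}$. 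Since $\Gr_B(A_n(k))$ is commutative, bracketing with a generator strictly lowers Bernstein degree, so $[x_i,P],[\partial_i,P]\in B^n_{j-1}$; by the inductive hypothesis $\varphi_{j-1}$ is injective, forcing $[x_i,P]=[\partial_i,P]=0$ for all $i$. Thus $P$ is central in $A_n(k)$, and since $\kar k=0$ the center is $k$, so $P$ is a nonzero scalar — contradicting $P\Gamma_j=0$ with $\Gamma_j\neq 0$. This proves the claim.

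Granting the claim, I get $\dim_k B^n_j\leq \dim_k\Gamma_j\cdot\dim_k\Gamma_{2j}$ for all $j$. The left-hand side equals $\binom{2n+j}{2n}$, a polynomial in $j$ of degree $2n$; and for $j\gg 0$ the right-hand side equals $p(j)\,p(2j)$, where $p$ is the polynomial with $\dim_k\Gamma_i=p(i)$ for $i\gg 0$, hence a polynomial in $j$ of degree $2\dim(M)$. Comparing degrees of these two polynomials forces $2n\leq 2\dim(M)$, i.e.\ $\dim(M)\geq n$.

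I expect the inductive claim to be the only genuine obstacle, and inside it the two structural facts it rests on: that commutator with a generator strictly lowers the Bernstein degree (this is what lets the induction descend), and that the center of $A_n(k)$ equals $k$ (this is exactly where characteristic zero is used — in positive characteristic $x_i^p$ and $\partial_i^p$ are central and the inequality genuinely fails). The Hilbert-polynomial bookkeeping at the end is routine.
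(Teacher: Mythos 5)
The paper does not prove this theorem: Bernstein's inequality is quoted as a classical result, with the reader referred to Bj\"ork and to Coutinho's Primer for proofs. What you have written is the standard argument (due to A.~Joseph, and essentially the proof in Chapter 9 of the Coutinho reference the paper cites), and it is correct: the injectivity of $B^n_j\to\Hom_k(\Gamma_j,\Gamma_{2j})$, proved by descending induction via commutators with the generators, plus the comparison $\binom{2n+j}{2n}\le p(j)p(2j)$, gives $2n\le 2\dim(M)$. The reduction to $\Gamma_0\neq 0$ by discarding an initial segment, and the use of characteristic zero via the triviality of the center, are both handled correctly.

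One justification deserves tightening. You derive $[x_i,P],[\partial_i,P]\in B^n_{j-1}$ from the commutativity of $\Gr_B(A_n(k))$. Commutativity of the associated graded only gives that the bracket of $B^n_1$ with $B^n_j$ lands in $B^n_{j}$ (one degree below the naive $j+1$), which is not enough: the induction needs the bracket in $B^n_{j-1}$, since you only know that it annihilates $\Gamma_{j-1}$. The fact you need is the sharper property of the Bernstein filtration that commutators drop the total degree by \emph{two}, i.e.\ $[B^n_1,B^n_j]\subseteq B^n_{j-1}$; this follows from the multiplication formula \eqref{monmult}, or directly from $[x_i,x^\alpha\partial^\beta]=-\beta_i\,x^\alpha\partial^{\beta-e_i}$ and $[\partial_i,x^\alpha\partial^\beta]=\alpha_i\,x^{\alpha-e_i}\partial^\beta$. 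With that stated, the proof is complete.
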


A finitely generated left module $M$ over $A_n(k)$ is called \emph{holonomic} if $M = 0$ or $M\neq 0$ and $\dim(M) = n$.
Submodules and quotient modules of holonomic modules are holonomic. If 
$0\rightarrow N \rightarrow M \rightarrow M/N \rightarrow 0$ is a short exact
sequence of holonomic modules, then 
\begin{equation}\label{multadd}
e(M) = e(N) + e(M/N).
\end{equation}



We have the following analogue \cite[Theorem 2.5]{Bavula}  of a
classical result due to Bernstein \cite[Corollary 1.4]{Bernstein}.

\begin{Theorem}\label{Bavula}
Let $M$ be a left module over $A_n(k)$. If $M$ has a filtration $M_v$
with
$$
\dim_k M_v \leq a v^n + \text{lower order terms}
$$
with $a > 0$, then $M$ is 
holonomic.
\end{Theorem}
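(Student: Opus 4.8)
The plan is to reduce everything to finitely generated submodules of $M$---for which Theorem~\ref{Bernstein} and the additivity \eqref{multadd} of multiplicity are available---and then to bootstrap back to $M$ itself, the point being that a priori $M$ need not be finitely generated and so has no well-defined dimension or multiplicity.

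First I would fix an arbitrary finitely generated submodule $N\subseteq M$, say generated by $m_1,\dots,m_r$, and choose $d\in\NN$ with $m_1,\dots,m_r\in M_d$; this is possible since $\bigcup_v M_v=M$. Then $\Gamma_v:=B^n_v m_1+\cdots+B^n_v m_r$ is a good filtration of $N$, and $B^n_v M_d\subseteq M_{v+d}$ gives $\Gamma_v\subseteq M_{v+d}$, so that
$$
\dim_k\Gamma_v\leq\dim_k M_{v+d}\leq a(v+d)^n+\text{lower order terms}=a v^n+\text{lower order terms}
$$
as a polynomial in $v$. Since $\dim_k\Gamma_v$ agrees for $v\gg 0$ with the Hilbert polynomial $p_N$ of the good filtration $\Gamma$ (of degree $\dim(N)$ and leading coefficient $e(N)/\dim(N)!$), this bound forces $\dim(N)\leq n$; if $N\neq 0$, Bernstein's inequality gives $\dim(N)\geq n$, hence $\dim(N)=n$, so $N$ is holonomic, and comparing leading coefficients yields the uniform bound $e(N)\leq a\cdot n!$.

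Next I would deduce that $M$ is finitely generated. If it were not, one could construct a strictly increasing chain $N_1\subsetneq N_2\subsetneq\cdots$ of finitely generated submodules of $M$: start from $0$ and repeatedly adjoin an element of $M$ lying outside the current submodule, noting that each new term is again finitely generated and proper (otherwise $M$ itself would be finitely generated). By the previous step each $N_j$ is holonomic with $e(N_j)\leq a\cdot n!$, while from the short exact sequences $0\to N_j\to N_{j+1}\to N_{j+1}/N_j\to 0$ of holonomic modules, \eqref{multadd} together with $N_{j+1}/N_j\neq 0$ gives $e(N_{j+1})\geq e(N_j)+1$, so $e(N_j)\to\infty$---a contradiction. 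Hence $M$ is finitely generated; applying the first step with $N=M$ then gives $\dim(M)\leq n$, and Bernstein's inequality gives $\dim(M)=n$ when $M\neq 0$, so $M$ is holonomic (the case $M=0$ being holonomic by convention).

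I expect the main obstacle to be conceptual rather than computational: because $M$ is not assumed finitely generated, it has no Hilbert polynomial and no a priori dimension, so one cannot argue with $M$ directly. The crux is to extract a multiplicity bound that is uniform over all finitely generated submodules, and then to let the additivity of multiplicity in short exact sequences force finite generation; by comparison, the filtration bookkeeping in the first step---the harmless shift $v\mapsto v+d$ and the leading-coefficient estimate---is routine.
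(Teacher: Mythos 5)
Your proof is correct and follows essentially the same route as the paper: bound the good filtration of an arbitrary finitely generated submodule $N$ by the shifted filtration $M_{v+d}$, apply Bernstein's inequality to get $\dim(N)=n$ and the uniform bound $e(N)\leq n!\,a$, then use the additivity \eqref{multadd} to force the ascending chain of finitely generated submodules to terminate at $M$ itself. You merely spell out in more detail the two steps the paper compresses into its final two sentences.
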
 
\begin{proof}
Let $N$ be a non-zero finitely generated submodule of $M$ and let
$N_0$ be a finite dimensional generating subspace. 
If $N_0\subset M_j$, then 
$$
N_i := B^n_i N_0 \subset B^n_i M_j \subset M_{i+j}
$$ 
and therefore
$$
\dim_k N_i \leq \dim_k M_{i + j} = a (i + j)^n + \text{lower degree terms}.
$$
Theorem \ref{Bernstein} implies that $N$ is holonomic with multiplicity bounded by $n!\, a$. 
By \eqref{multadd}, this 
implies that $M$ has a maximal finitely generated submodule, which has to be
$M$ itself.
\end{proof}

\section{Bimodules over the Weyl algebra}

In this chapter $k$ denotes a field of characteristic zero. 
A bimodule over the Weyl algebra $A_n(k)$ is called \emph{holonomic} if it is holonomic as a left module over the Weyl algebra $A_{2n}(k)$ through the
isomorphism $A_{2n}(k) \cong A_n(k) \otimes_k A_n(k)^\op$ given
by \eqref{double} and \eqref{transp}.

\subsection{Finite generation from the left or right}

The following theorem underlies the theoretical explanation of several
results in this paper.

\begin{Theorem}\label{Theorem:right}
Let $M$ be a bimodule over $A_n(k)$. 
\begin{enumerate}[(i)]
\item\label{right1}
If 
$M$ is finitely generated as a 
left or right module  over $A_n(k)$, then $M$ is 
holonomic.
\item\label{right2}
If $M$ is cyclic as
a left or right $A$-module, then $M$ is free as a left or right module and
irreducible as a bimodule. 
\end{enumerate}
\end{Theorem}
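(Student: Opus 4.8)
The plan is to prove $(\ref{right1})$ by bounding a left $A_{2n}(k)$-module filtration of $M$ in terms of the left $A_n(k)$-module filtration, and then to derive $(\ref{right2})$ from $(\ref{right1})$ together with Bernstein's inequality. In both parts the ``right'' case reduces to the ``left'' case by the symmetry interchanging the two sides (via the anti-automorphism $\tau$ of \eqref{transp}). So assume ${}_AM$ is generated by $m_1,\dots,m_r$; set $M_0=\Span_k\{m_1,\dots,m_r\}$ and let $\Lambda_v=B^n_vm_1+\cdots+B^n_vm_r$ be the resulting good filtration of ${}_AM$, so that $B^n_p\Lambda_s\subseteq\Lambda_{p+s}$. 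The crux is the following boundedness of the right action: there is a constant $c\ge1$ with $\Lambda_v\, b\subseteq\Lambda_{v+cq}$ for every $b\in B^n_q$ and every $v$. Indeed, right multiplication by a fixed element is an endomorphism of ${}_AM$ because $M$ is a bimodule; writing $m_jg=\sum_la^{(g)}_{lj}m_l$ for $g$ ranging over the generators $x_1,\dots,x_n,\partial_1,\dots,\partial_n$ and putting $c=\max_{g,j,l}\deg a^{(g)}_{lj}$, we get $(B^n_vm_j)g=B^n_v(m_jg)\subseteq\Lambda_{v+c}$, and iterating along a monomial of degree $\le q$ gives the claim.

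Next, under the identifications \eqref{double}, \eqref{transp} one has $B^{2n}_v=\sum_{p+q\le v}\alpha(B^n_p\otimes B^n_q)$, and $\alpha(u\otimes w)$ acts on $m\in M$ as $u\,m\,\tau(w)$, with $\deg\tau(w)=\deg w$; hence $B^{2n}_v\cdot m\subseteq\sum_{p+q\le v}B^n_p\,m\,B^n_q$. Combined with the boundedness above this yields $B^{2n}_v\cdot M_0\subseteq\sum_{p+q\le v}B^n_p\Lambda_{cq}\subseteq\Lambda_{cv}$ and, similarly, $B^{2n}_i\Lambda_{cv}\subseteq\Lambda_{c(v+i)}$; so $\{\Lambda_{cv}\}_{v\ge0}$ is a filtration of $M$ as a left $A_{2n}(k)$-module. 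Since $\dim_k\Lambda_{cv}$ is eventually polynomial in $v$ of degree $\le2n$, it is bounded by $av^{2n}+(\text{lower order})$ with $a>0$, and Theorem \ref{Bavula} (with $2n$ for $n$) shows $M$ is holonomic as a bimodule. The same filtration shows more: a nonzero bimodule that is finitely generated from one side has $\dim {}_AM=2n$, for if its dimension as a left $A_n(k)$-module were some $d<2n$, the filtration $\{\Lambda_{cv}\}$ would have growth $O(v^d)$ and the argument in the proof of Theorem \ref{Bavula}, together with Bernstein's inequality for $A_{2n}(k)$, would force $M=0$. This proves $(\ref{right1})$.

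For $(\ref{right2})$ assume $M\ne0$ is cyclic as a left module, $M=A_n(k)\,m_0\cong A_n(k)/I$ with $I=\{a:am_0=0\}$ a proper left ideal. If $I\ne0$, pick $0\ne a\in I$ of degree $s$; since $A_n(k)$ is a domain (Proposition \ref{Proposition:wbasis} and additivity of $\deg$), $B^n_{v-s}\,a\subseteq I\cap B^n_v$ has dimension $\dim_kB^n_{v-s}$, so the good filtration $B^n_v\bar1$ of $A_n(k)/I$ satisfies $\dim_k(B^n_v/(B^n_v\cap I))\le\dim_kB^n_v-\dim_kB^n_{v-s}$, a polynomial of degree $\le2n-1$; hence $\dim {}_AM\le2n-1$, contradicting the remark above. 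Thus $I=0$ and ${}_AM\cong A_n(k)$ is free of rank one. Finally let $N$ be a sub-bimodule with $0\ne N\ne M$. Through the left-module isomorphism $M\cong A_n(k)$, $N$ corresponds to a left ideal $L$ with $0\ne L\ne A_n(k)$, and $M/N$ is a nonzero bimodule, cyclic as a left module, with ${}_A(M/N)\cong A_n(k)/L$; so $\dim {}_A(M/N)\le2n-1$ by the domain argument, contradicting $\dim {}_A(M/N)=2n$. Hence $M$ is irreducible as a bimodule. (Concretely, $M\cong A_n(k)^\phi$, the graph of the endomorphism $\phi$ with $m_0x=\phi(x)m_0$.)

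The main obstacle is the boundedness of the right action: the left good filtration $\Lambda$ is a priori unrelated to the right module structure, and the key observation is that right multiplication by a fixed element is a left-module endomorphism whose defect with respect to $\Lambda$ is bounded and accumulates only linearly along monomials. Once this is in hand, the identification of $B^{2n}_v$ with bounded left-and-right degree under \eqref{double}, \eqref{transp}, the conclusion $\dim {}_AM=2n$, and the consequences for cyclic bimodules are formal applications of Bernstein's inequality and Theorem \ref{Bavula}.
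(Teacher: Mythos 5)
Your proof is correct and follows essentially the same route as the paper: convert the right action into the left action via the finite set of generators to bound the Bernstein filtration of $M$ over $A_{2n}(k)$ by the left filtration over $A_n(k)$, invoke Bernstein's inequality, and for part (ii) compare with the strictly smaller growth of $A_n(k)/I$ for $I\neq 0$ to get freeness and then irreducibility. The only (harmless) deviations are cosmetic: you package the key point as ``the left dimension of a nonzero one-sidedly finitely generated bimodule equals $2n$,'' and you prove the degree drop for $A_n(k)/I$ directly from the domain property instead of citing Coutinho.
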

\begin{proof}
We will give the proof of $\eqref{right1}$ and $\eqref{right2}$ 
with respect to the left module structure.
The proof for the right module structure is similar.

Let $m_1,...,m_r$ denote a set of generators 
of $M$ as a left module over $A_n(k)$.
For $a \in A_n(k)$, 
\begin{equation}\label{expa}
m_i a = \sum_{j=1}^r a_{ji} m_j
\end{equation}
with $a_{ji} \in A_n(k)$ and $i = 1, \dots, r$. With $a$ in \eqref{expa}, 
we put $d(a) = \max\{\deg(a_{ji}) \mid i, j = 1, \dots, r\}$. 
Let $D = \max\{d(a) \mid a = x_1, \dots, x_n, \partial_1, \dots, \partial_n\}$.
Then
\begin{equation}\label{switch}
M_i= \sum_{j=1}^r B_i^{2n} m_j \subseteq \sum_{j=1}^r 
B_{iD}^n m_j,
\end{equation}
and $M_i$ is a good filtration of $M$. Since
\begin{equation}\label{simpl}
\dim_k(M_j) \leq r \cdot \dim_k(B_{j D}^n)
= \frac{r D^{2n}}{(2n!)} j^{2n} + \text{lower degree terms in  $j$},
\end{equation}
it follows by Theorem \ref{Bernstein} that $M$ is holonomic.

For the proof of \eqref{right2}, let $m$ be a generator of $M$ as a left module over $A$. By \eqref{right1} we know that $M$ is a holonomic bimodule.
Consider the surjective map 
$$
f: A_n \rightarrow M
$$
of left modules
given by $f(a) = a m$. Let $J$ denote the kernel  $\Ker(f)$ of $f$. 
With $M_j$ denoting $B_j^{2n} m$
we obtain, using similar arguments as in the proof of \eqref{right1}, that 
$M_j$ is contained in $B_{jD}^n m$, for some integer $D$.
Thus
$$
\dim_k(M_j) \leq  \dim_k(B^n_{j D}\, [1])
$$
where $[1] = 1 + J\in A/J$. If $J\neq 0$, then
$$
\dim_k (B^n_{j D}\, [1]) = c j^d + \text{lower degree terms in  $j$},
$$
with $c>0$ and $d\leq 2n - 1$ (cf.~\cite{Coutinho}, Chapter 9, 3.5 {\sc Corollary}). This contradicts Theorem \ref{Bernstein} for the bimodule $M$. 

If $0\subsetneq N \subsetneq M$ is a proper sub-bimodule, then $M/N$ is a
non-zero cyclic left module with torsion contradicting what we just proved. Therefore $M$ is irreducible as a bimodule if it is cyclic as a left module.
\end{proof}

\subsection{The tensor product}

\begin{Theorem}\label{Theorem:tensor}
Let $M$ and $N$ be bimodules over $A_n(k)$.
If $M$ is finitely generated as a right module and
$N$ is holonomic, then
the bimodule
$$
M \otimes_{A_n(k)} N
$$ 
is holonomic.
\end{Theorem}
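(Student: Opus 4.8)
The plan is to produce a filtration of $M\otimes_{A_n(k)}N$, regarded as a left module over $A_{2n}(k)$, whose $k$-dimensions grow at most like $a\,v^{2n}$ plus lower order terms with $a>0$, and then to conclude with Theorem~\ref{Bavula} (applied with $2n$ playing the role of $n$). We may assume $M\otimes_{A_n(k)}N\neq 0$, since the zero bimodule is holonomic by convention; then $M\neq 0$ and $N\neq 0$, so in particular the multiplicity $e(N)$ of the holonomic bimodule $N$ is positive.

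I would fix right-module generators $m_1,\dots,m_r$ of $M$ and a good filtration $N_0\subset N_1\subset\cdots$ of $N$ as a left $A_{2n}(k)$-module, so that $\dim_k N_v=\frac{e(N)}{(2n)!}\,v^{2n}+(\text{lower order terms})$ for $v\gg 0$, and set $P_v:=\sum_{i=1}^{r}m_i\otimes N_v\subseteq M\otimes_{A_n(k)}N$. Each $P_v$ is a finite-dimensional quotient of $N_v^{\oplus r}$, the sequence is increasing, and $\cup_v P_v=M\otimes_{A_n(k)}N$: expanding the $M$-components of an arbitrary element $\sum_k u_k\otimes w_k$ in the right-module generators $m_i$ rewrites it as $\sum_i m_i\otimes w_i'$ with $w_i'\in N$, and then $w_i'\in N_v$ for $v\gg 0$. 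Moreover $\dim_k P_v\le r\dim_k N_v$, which has the required growth with positive leading coefficient. It remains to check that, after a harmless reindexing, $\{P_v\}$ is a filtration over $A_{2n}(k)$.

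Here I would invoke the switching argument from the proof of Theorem~\ref{Theorem:right}. Under the identification $A_{2n}(k)\cong A_n(k)\otimes_k A_n(k)^\op$ of \eqref{double}--\eqref{transp}, a degree-one generator of $A_{2n}(k)$ acts on $M\otimes_{A_n(k)}N$ either by right multiplication on the $N$-factor, which sends $P_v$ into $\sum_i m_i\otimes B^{2n}_1 N_v\subseteq P_{v+1}$, or by left multiplication by a generator $b$ of $A_n(k)$ on the $M$-factor. In the latter case, write $b m_i=\sum_j m_j c_{ji}$ with $c_{ji}\in A_n(k)$ and let $D$ be a positive integer bounding all the degrees $\deg c_{ji}$ as $b$ ranges over the degree-one generators; then $b(m_i\otimes n)=\sum_j m_j\otimes c_{ji}n$, and since $c_{ji}\in B^n_D\subseteq B^{2n}_D$ acts on $N$ through its left $A_n(k)$-module structure, $bP_v\subseteq\sum_j m_j\otimes B^{2n}_D N_v\subseteq P_{v+D}$. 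Hence $B^{2n}_1 P_v\subseteq P_{v+D}$, so $B^{2n}_i P_v\subseteq P_{v+iD}$ by induction, and $\widetilde P_v:=P_{vD}$ is a genuine filtration of $M\otimes_{A_n(k)}N$ over $A_{2n}(k)$ with $\dim_k\widetilde P_v\le\frac{r\,e(N)\,D^{2n}}{(2n)!}\,v^{2n}+(\text{lower order terms})$. Theorem~\ref{Bavula} then gives that $M\otimes_{A_n(k)}N$ is holonomic.

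The only delicate point is the bookkeeping of the three $A_n(k)$-actions in play: the left action on $M$ and the right action on $N$, which together furnish the left $A_{2n}(k)$-module structure on $M\otimes_{A_n(k)}N$, and the ``middle'' left action on $N$ into which right multiplication on $M$ gets absorbed after the rewriting $b m_i=\sum_j m_j c_{ji}$. One must check that this middle action is still controlled by the Bernstein filtration $B^{2n}$ of $A_{2n}(k)$ under \eqref{double}--\eqref{transp} --- it is, because the isomorphism $\alpha$ preserves degrees --- after which the dimension estimate proceeds exactly as in the proof of Theorem~\ref{Theorem:right}. Note that finite generation of $M$ from the right is used only to obtain the $m_i$ and the bound $D$; holonomicity of $N$ is what makes the exponent in the dimension bound equal to $2n$ rather than larger.
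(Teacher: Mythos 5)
Your proof is correct and follows essentially the same route as the paper's: pick right-module generators $m_i$ of $M$ and a good filtration $N_v$ of $N$, use the switching identity $bm_i=\sum_j m_jc_{ji}$ to absorb the left action on $M$ into the middle action on $N$, bound dimensions by $r\dim_k N_{vD}$, and invoke Theorem~\ref{Bavula}. The only difference is cosmetic — you filter by $P_v=\sum_i m_i\otimes N_v$ directly (and verify compatibility with the $A_{2n}(k)$-action a bit more explicitly than the paper does), whereas the paper filters by the product filtration $T_s$ and then observes $T_s\subseteq\sum_j m_j\otimes N_{sD}$.
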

\begin{proof}
Choose generators $m_1,m_2,\dots, m_r$ of $M$ as 
a right $A_n(k)$-module. Similar to the proof of 
Theorem \ref{Theorem:right} we may   find an
integer $D$, such that 
\begin{equation}\label{switch2} M_i = \sum_{j=1}^r B_i^{2n} m_j = \sum_{j=1}^r 
m_j B_{iD}^n.
\end{equation}
Let $\{ N_i \}_{i \geq 0}$ denote a good filtration of $N$,
and consider the filtration $\{ T_s \}_{s \geq 0}$
of $T = M \otimes_{A_n(k)}  N$, defined by letting $T_s$
denote the $k$-span of elements of the form 
 $m \otimes n$, with $m \in M_i$ and 
$n \in N_j$, with $i+j \leq s$.
By \eqref{switch2}, $T_s$ is contained in 
the subspace spanned by 
\begin{equation*}
m_j \otimes \widehat{n}
\end{equation*}
where $\widehat{n}\in N_{s d}$. In particular, 
\begin{equation*}
\dim_k(T_s) \leq r  \dim_k(N_{s D}).
\end{equation*}
The right hand side of the inequality is polynomial of degree $2n$ in $s$, and 
$T$ is holonomic by Theorem \ref{Bavula}.
\end{proof}

\subsection{The left and right rank of a bimodule}

\begin{Proposition}
Let $M$ be a holonomic bimodule over $A_n(k)$. If
$m_1, \dots, m_r\in M$ are left or right linearly independent over
$A_n(k)$, then
$$
r \leq e(M).
$$
\end{Proposition}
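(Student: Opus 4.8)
The plan is to compare two estimates for $\dim_k\Gamma_i$, where $\Gamma$ is a suitable good filtration of $M$ as a left $A_{2n}(k)$-module: the upper side given by the multiplicity, and a lower side forced by the linearly independent elements. Assume $r\ge 1$ (for $r=0$ there is nothing to prove) and that $m_1,\dots,m_r$ are left linearly independent over $A_n(k)$; the right case is symmetric and handled by interchanging left and right throughout. Since $M$ is holonomic it is nonzero and finitely generated as a left $A_{2n}(k)$-module of dimension $2n$. I would choose a finite generating set of $M$ over $A_{2n}(k)$ containing $m_1,\dots,m_r$, and let $\Gamma_i$ be the associated good filtration, so that $\dim_k\Gamma_i=\frac{e(M)}{(2n)!}\,i^{2n}+(\text{lower order})$ for $i\gg 0$.

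For the lower estimate I work inside $\Gamma_i$. Under the isomorphism $A_{2n}(k)\cong A_n(k)\otimes_k A_n(k)^\op$ of \eqref{double}--\eqref{transp}, left multiplication on $M$ by an element of $A_n(k)$ of degree at most $i$ is given by an element of the Bernstein filtration piece $B_i^{2n}$ (and likewise for right multiplication, since $\tau$ preserves the degree). Hence $B_i^n m_j\subseteq B_i^{2n}m_j\subseteq\Gamma_i$ for each $j$. Left linear independence of $m_1,\dots,m_r$ means precisely that $(a_1,\dots,a_r)\mapsto\sum_j a_j m_j$ is injective on $A_n(k)^r$, so the sum $\sum_{j=1}^r B_i^n m_j$ is direct, and
$$
\dim_k\Gamma_i\;\ge\;\sum_{j=1}^r\dim_k\!\big(B_i^n m_j\big)\;=\;r\,\dim_k B_i^n\;=\;\frac{r}{(2n)!}\,i^{2n}+(\text{lower order}),
$$
using $\dim_k B_i^n=\binom{i+2n}{2n}$. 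Comparing the leading coefficients of the two polynomial estimates for $\dim_k\Gamma_i$ gives $e(M)\ge r$, which is the assertion.

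I expect the only point that needs care to be the bookkeeping in the second paragraph: checking that left (resp. right) multiplication by $B_i^n$ does not leave the good filtration $\Gamma_i$ of $M$ regarded as an $A_{2n}(k)$-module. This is exactly the compatibility statement that, under \eqref{double}--\eqref{transp}, the Bernstein filtration of $A_{2n}(k)$ contains the images of the Bernstein filtrations of the two copies of $A_n(k)$ acting on the left and on the right, which is immediate because $\tau$ is degree-preserving and $\alpha$ sends degree-$d$ monomials to degree-$d$ monomials. Granting that, the rest is a routine comparison of leading terms of Hilbert polynomials.
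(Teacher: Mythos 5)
Your proof is correct and follows essentially the same route as the paper: build a good filtration $\Gamma_i$ of $M$ as an $A_{2n}(k)$-module from a generating set containing $m_1,\dots,m_r$, note that $\sum_j B_i^n m_j\subseteq\Gamma_i$ is a direct sum of dimension $r\binom{i+2n}{2n}$ by linear independence, and compare leading coefficients. The only cosmetic difference is that the paper takes the generating set to be the $m_j$ together with a single cyclic bimodule generator, whereas you allow any finite generating set enlarged to contain the $m_j$; both yield the same comparison.
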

\begin{proof}
We give the proof for left linearly independent elements. The right
linear independence is similar. Let $m_1,m_2, \dots,m_r$ 
denote elements in $M$ which are linearly independent 
under the left action of $A_n(k)$. As holonomic modules
modules are cyclic, we may   choose an element
$m  \in M$ generating $M$ as a bimodule over $A_n(k)$.

The elements $m, m_1, \dots, m_r\in M$ then generate 
$M$ as a bimodule over $A_{n}(k)$. Therefore
$$
M_i = B^{2n}_i m +  B^{2n}_i m_1 + \cdots + B^{2n}_i m_r
$$
defines a good filtration of $M$.
Using the left action, we define the subspace
$$
N_i = B^n_i m_1 + \cdots + B^n_i m_r \subset M_i.
$$
Here
\begin{equation*}
\dim_k(N_i) = r \dim_k(B_i^n) = 
r \binom{2n+i}{2n} = 
\frac{r}{2n!} i^{2n} + \text{lower
degree terms in $i$ },
\end{equation*}
since $m_1, \dots, m_r$ are left linearly independent. Since $M$ is holonomic and $\dim_k N_i \leq 
\dim_k M_i$, this implies by comparing leading terms that $r\leq e(M)$.
\end{proof}

This leads to the following definition.

\begin{Definition}
Let $M$ be a holonomic bimodule over the Weyl algebra $A_n(k)$.
Then the left (right) rank $\lrk(M)$ $(\rrk(M))$ of $M$ is defined as the
rank of the largest free left (right) submodule in $M$. 
\end{Definition}

\begin{Theorem}
Let $M$ denote a holonomic $A:=A_n(k)$-bimodule.
Then the dual bimodule $\Hom(M, A)$ is finitely generated as a 
right module and holonomic as a bimodule over $A$.
\end{Theorem}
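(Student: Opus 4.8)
The plan is to realize $\Hom(M,A)$ as a right $A$-submodule of a free right $A$-module of finite rank. Finite generation from the right then follows from Noetherianity of $A$, and holonomicity is an immediate consequence of Theorem~\ref{Theorem:right}\,\eqref{right1}.

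Set $r:=\lrk(M)$. This is finite: a free left submodule of rank $s$ supplies $s$ left linearly independent elements of $M$, so $s\le e(M)$ by the preceding Proposition, and hence $\lrk(M)\le e(M)<\infty$. Choose left linearly independent $m_1,\dots,m_r\in M$ and let $F:=A m_1\oplus\cdots\oplus A m_r\cong A^r$ be the associated free left submodule, of maximal rank. The key step is the observation that the quotient left $A$-module $Q:=M/F$ is torsion: if some class $\bar y=y+F$ had trivial left annihilator, i.e. $ay\in F\Rightarrow a=0$, then $F+Ay$ would be a free left submodule of $M$ of rank $r+1$, contradicting $r=\lrk(M)$.

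Next, apply the left-exact contravariant functor $\Hom_A(-,A)$ to the short exact sequence $0\to F\to M\to Q\to 0$ of left $A$-modules. Since $A$ is a domain, every left $A$-module homomorphism from the torsion module $Q$ to $A$ vanishes, so $\Hom_A(Q,A)=0$ and the restriction map
$$
\Hom(M,A)=\Hom_A({}_AM,{}_AA)\longrightarrow \Hom_A({}_AF,{}_AA)\cong A^r
$$
is injective. This map is \emph{not} in general a homomorphism of bimodules, because the left action $(a\varphi)(x)=\varphi(xa)$ on $\Hom(M,A)$ need not restrict to $F$ (the submodule $F$ need not be stable under right multiplication); but it \emph{is} a homomorphism of right $A$-modules, the right $A$-structure on $\Hom_A(F,A)\cong A^r$ being coordinatewise right multiplication. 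Thus $\Hom(M,A)$ is isomorphic to a right $A$-submodule of $A^r$, and since $A$ is right Noetherian, $A^r$ is a Noetherian right module, so $\Hom(M,A)$ is finitely generated as a right $A$-module. Finally, Theorem~\ref{Theorem:right}\,\eqref{right1} shows that a bimodule finitely generated from the right is holonomic, which completes the proof.

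The only point that is not purely formal is the identification of $Q=M/F$ as a torsion left module via maximality of the free left submodule $F$; once this is in hand, the rest reduces to left-exactness of $\Hom$, the fact that $A$ is a Noetherian domain, and Theorem~\ref{Theorem:right}.
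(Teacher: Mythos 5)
Your proof is correct and follows essentially the same route as the paper: embed $\Hom(M,A)$ into $A^r$ via evaluation/restriction on a maximal free left submodule $F$ with $r=\lrk(M)$, use that $M/F$ is torsion and $A$ is a domain to get injectivity, invoke right Noetherianity for finite generation, and conclude holonomicity from Theorem~\ref{Theorem:right}\,\eqref{right1}. Your write-up in fact supplies two details the paper leaves implicit (why $M/F$ is torsion, and why the embedding is only a right-module map), but the argument is the same.
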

\begin{proof}
Let $m_1, \dots, m_r$ denote a basis of a maximal
free left submodule $F$ of $M$, where $r = \lrk(M)$.
The map
\begin{equation}\label{rmap}
\alpha: \Hom(M ,A) \rightarrow A^r
\end{equation}
given by
\begin{equation*}
\alpha(f)= (f(m_1), \dots, f(m_r))
\end{equation*}
is a homomorphism of right modules over $A$.
We claim that $\alpha$ is injective. If
$\alpha(f) = 0$, then $f$ defines
a homomorphism $\overline{f}: M/F\rightarrow A$.
Since $F$ is a maximal free submodule, $M/F$ 
is a torsion module. This implies that
$\overline{f} = 0$, since $A$ has no zero
divisors. Therefore $f = 0$ and $\alpha$ is
injective.

Finally, $A$ being right noetherian implies by the injectivity of 
$\alpha$ in \eqref{rmap} that
the dual bimodule is finitely generated as a right $A$-module.
Now holonomicity follows from Theorem \ref{Theorem:right}.
\end{proof}

\subsection{The graph of a Weyl algebra  endomorphism}

An immediate application of Theorem \ref{Theorem:right} is the 
following.

\begin{Corollary}\label{Corollary:Graph}
Let $k$ be a field of characteristic zero and let $A = A_n(k)$ denote the $n$-th Weyl 
algebra over $k$. If $f$ is an endomorphism of $A$, then the graph $A^f$ and the dual graph ${}^f A$ are
holonomic and simple bimodules over $A$.
\end{Corollary}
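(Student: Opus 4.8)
The plan is to deduce the corollary directly from Theorem~\ref{Theorem:right} by exhibiting each of the two bimodules as a \emph{cyclic} module on one side. Consider the graph $A^f$ first. By definition the underlying left $A$-module of $A^f$ is $A$ acting on itself by left multiplication; in particular $A^f$ is nonzero and cyclic --- in fact free of rank one --- as a left $A$-module, with generator $1$. Part (i) of Theorem~\ref{Theorem:right} then says that $A^f$ is holonomic, and part (ii) says that a bimodule which is cyclic as a left module is irreducible, i.e.\ simple, as a bimodule. Both assertions for $A^f$ follow at once.

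For the dual graph ${}^fA$ the argument is the mirror image: here it is the \emph{left} action that is twisted by $f$, while the right action is the ordinary right multiplication of $A$ on itself, so the underlying right $A$-module of ${}^fA$ is $A$, cyclic with generator $1$. Invoking the right-module versions of parts (i) and (ii) of Theorem~\ref{Theorem:right} --- whose proofs are, as noted there, obtained from the left-module ones by interchanging the roles of left and right --- gives that ${}^fA$ is holonomic and simple as a bimodule.

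The only things worth checking, and they are routine, are that the $f$-twisted structures genuinely define $A$-bimodules and that twisting one side does not disturb cyclicity on the other: for $A^f$ one has $(a_1 m)s = a_1 m f(s) = a_1(ms)$, so the left and right actions commute, and since the left action is untouched $1$ still generates from the left; symmetrically for ${}^fA$. I expect no real obstacle beyond this. The substantive input --- the two applications of Bernstein's inequality (Theorem~\ref{Bernstein}) together with the multiplicity additivity \eqref{multadd} used inside the proof of Theorem~\ref{Theorem:right} --- has already been expended, which is precisely why the corollary is labelled an ``immediate application''.
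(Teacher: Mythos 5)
Your argument is exactly the paper's: $A^f$ is cyclic (indeed free of rank one) as a left module with generator $1$, ${}^fA$ is cyclic as a right module, and both conclusions follow from the two parts of Theorem~\ref{Theorem:right}. The extra verifications you include are correct and routine, so the proposal is sound and matches the intended proof.
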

\begin{proof}
This follows from part \eqref{right2} of Theorem \ref{Theorem:right}, since
$A^f$ is cyclic as a left module and ${}^f A$ is cyclic as a
right module.
\end{proof}

We also obtain the following result due to Bavula 
\cite[Corollary 1.4]{Bavula}.

\begin{Corollary}\label{Corollary:doublegraph}
Let $f:A\rightarrow A$ be an endomorphism. Then the bimodule
\begin{equation}\label{bavulamod}
{}^f A^f = {}^f A\otimes_{A} A^f
\end{equation}
is holonomic.
\end{Corollary}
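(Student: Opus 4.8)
The plan is to deduce this directly from Theorem \ref{Theorem:tensor} applied to $M = {}^f A$ and $N = A^f$, so the only points needing verification are that ${}^f A$ is finitely generated as a right $A$-module and that $A^f$ is holonomic.

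First I would observe that twisting the \emph{left} module structure by $f$ leaves the right module structure untouched: as a right $A_n(k)$-module, ${}^f A$ is simply $A$ with its natural right multiplication, hence cyclic, generated by $1$. In particular ${}^f A$ is finitely generated as a right module. Next, $A^f$ is cyclic as a left module over $A_n(k)$ (generated by $1$, with the natural left multiplication), so by part \eqref{right1} of Theorem \ref{Theorem:right} it is holonomic; alternatively this is exactly the content of Corollary \ref{Corollary:Graph}.

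Finally, ${}^f A \otimes_A A^f$ is precisely the tensor product $({}^f A)_A \otimes_{{}_A} {}_A(A^f)$ formed from the natural right structure on ${}^f A$ and the natural left structure on $A^f$, so Theorem \ref{Theorem:tensor} applies verbatim with $M={}^fA$ finitely generated from the right and $N=A^f$ holonomic, yielding that ${}^f A \otimes_A A^f$ is a holonomic bimodule. I do not expect a genuine obstacle here: the argument is a direct combination of the two theorems, and the only thing requiring a moment's care is keeping track of which side of each bimodule carries the $f$-twist, so that the finite-generation hypothesis of Theorem \ref{Theorem:tensor} is checked on the correct (untwisted) side. One could alternatively route the proof through Proposition \ref{Proposition:graphinv}(b), identifying ${}^f A$ with $\Hom(A^f, A)$ and invoking the preceding theorem on duals of holonomic bimodules to get finite generation from the right, but the elementary observation above is cleaner.
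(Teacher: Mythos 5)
Your proposal is correct and follows exactly the paper's own route: the paper derives the corollary from Corollary \ref{Corollary:Graph} (holonomicity of $A^f$) together with Theorem \ref{Theorem:tensor}, with ${}^f A$ playing the role of the right-finitely-generated factor. Your explicit check that the $f$-twist sits on the left of ${}^f A$, so its right module structure is the natural cyclic one, is precisely the detail the paper leaves implicit.
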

\begin{proof}
This is a consequence of Corollary \ref{Corollary:Graph} and Theorem \ref{Theorem:tensor}.
\end{proof}

A famous question (cf.~\cite{Dixmier}, {\bfseries 11. Probl\'emes}, 11.1) due to Dixmier asks if an endomorphism $f$ of
the first Weyl algebra $A_1(k)$ over a field $k$ of characteristic zero is an
automorphism. This is equivalent to $f(A_1(k)) = A_1(k)$, since an
endomorphism of the Weyl algebra is injective.

One may ask the same question for the $n$-th Weyl algebra. This
is equivalent to the simplicity of the bimodule tensor product of the
two simple modules in \eqref{bavulamod}, since
$f(A)$ is a bisubmodule of ${}^fA^f$
 (cf.~\cite{Bavula}, p.~686).

\subsection{Burnside's theorem on matrix algebras}

Based on the simplicity of the graph (Corollary \ref{Corollary:Graph}) the following question emerges. 

\begin{quote}
\emph{If $S$ is a subalgebra of an algebra $A$ and
$A$ is a simple $A$--$S$ bimodule, is $S = A$?}
\end{quote}

Phrased in this generality, the answer is no.
However, for a matrix algebra $A$ over an algebraically closed field, the question
has a positive answer. This is related to Burnside's classical theorem \cite[(27.4)]{CR} 
that every proper subalgebra of $A$ has an invariant 
subspace.

\begin{Theorem}\label{Theorem:BurnsideMatrix}
Let $k$ be an algebraically closed field and $S$ a subalgebra of 
$A=\Mat_n(k)$. If $A$ is simple as an $A$--$S$ bimodule, 
then $S = A$.
\end{Theorem}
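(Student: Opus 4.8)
The plan is to exploit the bimodule structure of $A=\Mat_n(k)$ by decomposing it as a right $S$-module, using the Artin--Wedderburn theory for the finite-dimensional algebra $S$. First I would observe that $A$ is simultaneously a left $A$-module and a right $S$-module, and since $A$ is simple as an $A$--$S$ bimodule, the ring of $A$--$S$ bimodule endomorphisms $\End_{A\text{-}S}(A)$ is a division ring by Schur's lemma; since $k$ is algebraically closed and everything is finite-dimensional, this forces $\End_{A\text{-}S}(A)\cong k$. Now $\End_{A\text{-}S}(A)$ can be computed: an $A$-linear endomorphism of ${}_AA$ is right multiplication by some element of $A$, and requiring it to be right $S$-linear as well forces that element to lie in the centralizer of $S$ in $A$. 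So the upshot of simplicity is that $C_A(S) = k$, the scalar matrices.

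Next I would bring in Burnside's theorem in the form quoted: if $S\subsetneq A=\Mat_n(k)$, then $S$, viewed as a set of operators on $V=k^n$, has a proper nonzero invariant subspace $W$. The strategy is to manufacture from $W$ a nonscalar element of $C_A(S)$, contradicting what was just established. This is where I would need to be careful: a single invariant subspace does not immediately give a commuting idempotent unless the $S$-action is semisimple. So the real work is to reduce to, or arrange, a situation where $V$ decomposes as an $S$-module into isotypic pieces, whereupon the projection onto (or a scalar on each) isotypic component commutes with $S$ and is generically nonscalar.

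The key steps, in order: (1) translate simplicity of the bimodule into $C_A(S)=k$, via the Schur's-lemma computation of $\End_{A\text{-}S}(A)$ above; (2) show that $V=k^n$ is a \emph{semisimple} $S$-module — this should follow because if $V$ had a nonzero radical (a maximal proper $S$-submodule-free situation), one could build a nonzero nilpotent or a nonscalar idempotent-like element centralizing $S$; more cleanly, one argues the Jacobson radical $J(S)$ must act as zero on $V$ since $A=\Mat_n(k)$ is semisimple and $J(S)\cdot A$ would be a proper bi-submodule or yield a nonscalar centralizing element, so $S/J(S)$ is a semisimple algebra acting faithfully on $V$; (3) decompose $V=\bigoplus V_i^{\oplus m_i}$ into $S$-isotypic components and note $C_A(S)\cong\prod_i\Mat_{m_i}(k)$ by the double centralizer / Wedderburn theory; (4) since $C_A(S)=k$, there is exactly one isotypic component and $m_i=1$, i.e. $V$ is an irreducible $S$-module; (5) but then $S$ acts irreducibly on $V$, so by Burnside's theorem (the standard density form) $S=\End_k(V)=\Mat_n(k)=A$, contradicting $S\subsetneq A$ — hence $S=A$.

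The main obstacle I anticipate is step (2): establishing semisimplicity of the $S$-module $V$, equivalently that $J(S)V=0$. The clean way is probably: if $v\in V$ with $J(S)v\neq 0$, or rather, consider the $A$--$S$ sub-bimodule of $A=\Mat_n(k)\cong V^{\oplus n}$ generated by matrices whose columns lie in $J(S)V$; show this is a proper nonzero sub-bimodule, contradicting simplicity — this requires checking it is left $A$-stable (clear, since it is a union of column spaces) and right $S$-stable (needs $J(S)V$ to be $S$-stable, which holds since $J(S)$ is a two-sided ideal of $S$) and proper (since $J(S)$ is nilpotent, $J(S)V\neq V$ by Nakayama). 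Once semisimplicity is in hand, the centralizer computation and the appeal to Burnside are routine. I would double-check that the chain of implications does not secretly use $S=A$ prematurely, and that the identification $C_A(S)\cong\prod_i\Mat_{m_i}(k)$ is applied with the correct (opposite) conventions.
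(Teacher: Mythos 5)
Your overall route (Schur's lemma to get $C_A(S)=k$, semisimplicity of $k^n$ over $S$, double centralizer, then Burnside) is viable, but the step you yourself single out as the crux --- step (2) --- fails as written. The set of matrices whose columns lie in a fixed subspace $W\subset k^n$ is a \emph{right} ideal of $\Mat_n(k)$, not a left ideal: $\operatorname{colspace}(am)=a\cdot\operatorname{colspace}(m)$, so this set is left $A$-stable only for $W=0$ or $W=k^n$. Hence the object you describe is not an $A$--$S$ sub-bimodule, and the sub-bimodule it generates need not be proper, so no contradiction with simplicity is obtained. The repair is to swap rows for columns: the left ideals of $\Mat_n(k)$ are exactly the sets $I(W)=\{m : \operatorname{rowspace}(m)\subset W\}$, and $I(W)$ is right $S$-stable precisely when $W$ is a right $S$-submodule of the row space $k^n$. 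Taking $W=k^nJ(S)$ (nonzero if $J(S)\neq 0$ because $S$ acts faithfully on row vectors, and proper because $J(S)$ is nilpotent) produces the proper nonzero sub-bimodule you want, and the rest of your argument --- isotypic decomposition, $\End_S(k^n)\cong C_A(S)=k$ forcing a single component of multiplicity one, then Burnside --- goes through.

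Once the correspondence $W\mapsto I(W)$ is set up correctly, however, the entire detour through Schur's lemma, the radical, and the double centralizer becomes unnecessary: the $A$--$S$ sub-bimodules of $A$ are \emph{exactly} the $I(W)$ with $W$ a right $S$-submodule of $k^n$, so simplicity of the bimodule is equivalent to irreducibility of the row space as a right $S$-module, and Burnside finishes immediately. This is the paper's proof, phrased there via the matrix $f$ with all rows equal to a fixed nonzero $v$ and the inclusion $AfS\subset I(vS)$, which forces $vS=k^n$. What your longer route buys is a description of what happens when the bimodule is not simple (the centralizer $\prod_i\Mat_{m_i}(k)$ indexed by isotypic components), but for the statement at hand it is extra machinery carrying extra risk, as the row/column slip illustrates.
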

\begin{proof}
Let $v$ be a non-zero row vector in $k^n$ and $f\in A$ the matrix with rows $v$.
Then by assumption $A f S = A$. 
Let $I(v S)$ be the left ideal of matrices in $A$ whose row space is 
contained in the subspace $v S \subset k^n$. 
Clearly $f S \subset I(v S)$. Therefore
$I(v S) = A = I(k^n)$ and $v S = k^n$.
The classical Burnside theorem on matrix algebras
now implies that $S = A$.
\end{proof}

We give three examples showing that this result does not, however, generalize easily to the infinite dimensional case. In each of the examples, $k$ denotes an algebraically closed field of characteristic zero.
Example \ref{Example:qtorus}  was communicated to us by S\o ren J\o ndrup.

\begin{Example}\label{Example:Weyl1}
Let $S$ be the subalgebra of $A = A_1(k)$ given by
$S = k + x A$. It follows from Proposition \ref{Proposition:wbasis}
that $\partial\not\in S$ and therefore $S\neq A$.
For $f\in A\setminus\{0\}$, $A f S \supset A f x A = A$, since $A$
is a simple ring and $x$ is not a zero divisor. Therefore
$S$ is a proper subalgebra of $A$, but $A$ is a
simple $A$--$S$ bimodule.
\end{Example}

\begin{Example}\label{Example:qtorus}
Consider the quantum plane $A_q = k\<x, y\>/ (x y - q x y)$, where 
$q\in k$ is not a root of unity. Localizing $A_q$ in powers of $x$ and $y$ we get the quantum torus
$$
T_q = \left\{\sum_{i,j} \lambda_{ij} x^i y^j\, \middle| \, \lambda_{ij}\in k, i, j\in \ZZ\right\},
$$
which is a simple noetherian ring. The subalgebra $S$ generated by
$x^{-2}, y^{-2}, x^2, y^2$ is the subset of Laurent polynomials in even powers
of $x$ and $y$. This subalgebra is proper and simple. Furthermore, $T_q$ is a simple
$T_q$--$S$ bimodule.
\end{Example}

The subalgebra $S$ in Example \ref{Example:Weyl1} is not simple, since
$x A\subset S$ is a non-trivial two-sided ideal. 
Like the quantum torus in Example \ref{Example:qtorus}, the
Weyl algebra $A$ in Example \ref{Example:Weyl1} also affords proper simple subalgebras $S$,
such that $A$ is a simple $A$--$S$ bimodule. 

\begin{Example}
This example can be traced back to \cite{Dixmier2}.
Let $U$ denote
the enveloping algebra for the semisimple Lie algebra $\mathfrak{sl_2}(k)$.
Recall that $U$ is generated by $E$, $F$ and $H$ with the relations
$$
[E, F] = H,\qquad [H, E] = 2 E \qquad\text{and}\qquad [H, F] = -2 F.
$$
The center of $U$ is generated by the Casimir element $Q = 4 F E + H^2 + 2 H$. It is known that the primitive ideals in $U$ all have the form $I_c = 
U(Q - c)$ for $c\in k$ and that $U/I_c$ is a simple ring for
$c\neq n^2 + 2 n$ with $n = 0, 1, 2, \dots$.
There is
an embedding (cf.~\cite{Dixmier2}, Remarques 7.2)
$
U/I_c\rightarrow A_1(k)
$
given by 
$$
E\mapsto -\mu x - \partial x^2,\quad F\mapsto \partial\quad\text{and}\quad
H\mapsto \mu + 2 \partial x
$$
for $c = \mu^2 + 2\mu$. 
The subalgebra $S = U/I_1$ is a simple subalgebra of $A_1(k)$ 
containing $\partial$.

In general, if $R$ is a simple subalgebra of $T$ and $T f R \cap R \neq\{0\}$ for $f\in T\setminus \{0\}$, then 
$T f R = T$.  

The fact that $\partial\in S$ implies that 
$A f S \cap S \neq \{0\}$ for $f\in A\setminus \{0\}$, since 
$\ad(\partial)^n(f)\in S\setminus\{0\}$ for some $n\geq 0$. This shows that $A_1(k)$ is a
simple $A_1(k)$--$S$ bimodule. Furthermore, $M = x^{-1} k[x^{-1}]$ is an
$S$-stable subset for the natural action of $A_1(k)$ on
$k[x, x^{-1}]$ corresponding to the Verma module with highest weight $-1$ for 
$\mathfrak{sl_2}(k)$. Since $x M \not\subset M$, $S$ is a proper subalgebra. One may prove that $A$ is not finitely generated as a right or left $S$-module.
Furthermore $A$ is not flat as an $S$-module \cite[7. Remarks, 2.]{Smith}. This
should be compared with the flatness of endomorphisms of Weyl algebras proved in
\cite{LT}.
\end{Example}



\section{A Gr\"obner basis algorithm for the inverse}

Let $A = A_n(k)$ be the $n$-th Weyl algebra over an arbitrary field $k$.
Gr\"obner basis theory for the Weyl algebra was initiated by Galligo
\cite{Galligo} and later extended to a wider class of
non-commutative algebras with monomial bases by several authors. 
Here we will focus on the Weyl algebra and
briefly recall the theory, which is rather close to the classical theory
of Gr\"obner bases for  
the commutative polynomial ring in $n$ variables.

\subsection{Gr\"obner basics for the Weyl algebra}

Let $\prec$ be a term order on the monomials $M = \{x^\alpha \partial^\beta \mid
\alpha, \beta\in \NN^n\}\subset A$, i.e.
\begin{align}
&\text{$\prec$ is a total order}\\
&1\prec x^\alpha \partial^\beta\\
&x^\alpha \partial^\beta \prec x^{\alpha'} \partial^{\beta'} \implies
x^{\alpha  + \gamma} \partial^{\beta + \delta} \prec x^{\alpha'+\gamma} \partial^{\beta'+\delta}\label{tomult}
\end{align}
for every $\alpha, \beta, \alpha', \beta', \gamma, \delta\in \NN^n$.
The support, $\supp(f)$, of a differential operator
\begin{equation}\label{fsupp}
f = \sum_{(\alpha, \beta)\in \NN^{2n}} \lambda_{\alpha, \beta} x^\alpha \partial^\beta\in
A\setminus \{0\}
\end{equation}
is the set of monomials $x^\alpha\partial^\beta$ in \eqref{fsupp} with $\lambda_{\alpha, \beta}\neq 0$.
The \emph{leading monomial} $\lm(f)$ of $f$ is the maximal monomial in $\supp(f)$. If $\lm(f) = x^u \partial^v$, then $\lt(f):=\lambda_{u, v} x^u \partial^v$ is
called the \emph{leading term} and $\lc(f) := \lambda_{u,v}$ the
\emph{leading coefficient} of $f$.

Let $\leq$ denote the partial order on $\NN^{2n}$ given by
$(u_1, \dots, u_{2n}) \leq (v_1, \dots, v_{2n})$ 
if and only if
$u_1\leq v_1 \wedge \cdots \wedge u_{2n}\leq v_{2n}$.
This
partial order is transferred to $M$ and reads
$m_1 \leq m_2$ if and only if $m_1$ divides $m_2$ as commutative monomials.

If $\lm(d) = x^{\alpha} \partial^\beta \leq
\lm(f) = x^{\alpha'} \partial^{\beta'}$ for $f, d\in A$, then
\begin{align*}
f - q d &= 0\qquad\text{or}\\
\lm(f - q d) &\precneq \lm(f),
\end{align*}
where
$q = \lc(d)^{-1} \lc(f) x^{\alpha'-\alpha} \partial^{\beta' - \beta}$.
This is basically a consequence of \eqref{tomult} and the identity
\begin{equation}\label{monmult}
(x^{\alpha} \partial^{\beta}) (x^{\alpha'} \partial^{\beta'}) = x^{\alpha + \alpha'} \partial^{\beta + \beta'} + 
\sum_{i\geq 1} c_i x^{\alpha + \alpha' - i} \partial^{\beta + \beta'- i}, \qquad
c_i\in k
\end{equation}
for multiplication of monomials in the first Weyl algebra. 
An element $f\in A\setminus\{0\}$ is called
\emph{reducible} by a finite subset $S\subset A$ if
there exists $m\in \supp(f)$ with $\lm(g) \leq m$ for
some $g\in S\setminus\{0\}$.

These observations allow for the commutative theory to be carried over
almost verbatim i.e., the division algorithm, the $S$-polynomial, 
Buchberger's $S$-criterion, uniqueness of reduced Gr\"obner bases etc.
The computations, however, are severely complicated by the ``non-commutative''
multiplication \eqref{monmult}.

Let $A S$ denote the left 
ideal generated by $S$, where $S\subset A$.

\begin{Definition}
\leavevmode
\begin{enumerate}[(i)]
\item
A finite subset $G\subset A$ is called a Gr\"obner basis (with respect
to $\prec$) if 
for every $f\in A G\setminus \{0\}$, there exists $d\in G$ with
$$
\lm(d) \leq \lm(f).
$$
\item
A Gr\"obner basis $G = \{f_1, \dots, f_r\}$ is called reduced if 
$f_i\neq 0, \lc(f_i) = 1$ and $f_i$ is not reducible by $G\setminus\{f_i\}$ for
$i = 1, \dots, n$.
\item
A Gr\"obner basis $G$ is called a Gr\"obner basis for a left
ideal $I\subset A$ if $I = A G$.
\end{enumerate}
\end{Definition}

\begin{Theorem}
Let $I\subset A$ be a non-zero left ideal and $\prec$ a term order. There
exists a unique reduced Gr\"obner basis for $I$ with respect to
$\prec$.
\end{Theorem}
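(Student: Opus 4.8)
The plan is to mirror the classical Buchberger-style argument, using the fact—emphasized in the excerpt—that the division step and $S$-polynomial constructions for the Weyl algebra behave "almost verbatim" as in the commutative case, the only complication being the lower-order correction terms in \eqref{monmult}, which never raise the leading monomial and therefore do not interfere with the termination or uniqueness arguments.

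\medskip\noindent\textbf{Existence.}
First I would establish that $I$ has \emph{some} Gr\"obner basis. Consider the set $L = \{\lm(f) \mid f\in I\setminus\{0\}\}\subset M$. Identifying $M$ with $\NN^{2n}$ via $x^\alpha\partial^\beta\mapsto(\alpha,\beta)$, the divisibility order $\leq$ on $M$ is the coordinatewise order on $\NN^{2n}$, and $L$ is an up-set for this order; by Dickson's lemma $L$ has finitely many minimal elements $\lm(f_1),\dots,\lm(f_r)$ with $f_i\in I$. I claim $G=\{f_1,\dots,f_r\}$ is a Gr\"obner basis for $I$: given $f\in AG\setminus\{0\}\subseteq I\setminus\{0\}$, its leading monomial $\lm(f)$ lies in $L$, hence is divisible by some $\lm(f_i)$, i.e.\ $\lm(f_i)\leq\lm(f)$. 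To finish I need $AG = I$; this is the standard division-algorithm argument: repeatedly subtract an appropriate $q\,f_i$ from $f\in I$ to strictly lower the leading monomial (possible whenever $\lm(f_i)\leq\lm(f)$, by the displayed reduction identity using \eqref{tomult} and \eqref{monmult}), and since $\prec$ is a well-order on $M$ (it is a term order, so $1\prec m$ for all monomials $m\neq 1$, which together with totality forces the descending chain condition), the process terminates, writing any $f\in I$ as a left $G$-combination. Thus $AG = I$.

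\medskip\noindent\textbf{Reduction to a reduced basis.}
Starting from any Gr\"obner basis $G$ of $I$, I would first discard any $f_i$ whose leading monomial is divisible by $\lm(f_j)$ for some $j\neq i$; the remaining set is still a Gr\"obner basis (the set $L$ of leading monomials of $I$ is unchanged) and now has pairwise non-dividing leading monomials. Next, scale each generator so $\lc(f_i)=1$, and then replace each $f_i$ by its remainder on full reduction modulo $G\setminus\{f_i\}$: since no $\lm(f_j)$ divides $\lm(f_i)$, reduction never touches the leading term, so $\lm(f_i)$ is preserved (hence the leading-monomial set, hence the Gr\"obner-basis property and the generated ideal are preserved), while afterwards no monomial in $\supp(f_i)$ is divisible by any $\lm(f_j)$, $j\neq i$. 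This yields a reduced Gr\"obner basis for $I$.

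\medskip\noindent\textbf{Uniqueness.}
Suppose $G=\{f_1,\dots,f_r\}$ and $H=\{g_1,\dots,g_s\}$ are both reduced Gr\"obner bases for $I$. First, the set of leading monomials of $G$ equals that of $H$: each is the set of minimal elements of $L=\{\lm(f):f\in I\setminus\{0\}\}$, because in a reduced basis the leading monomials are incomparable (so each is minimal in its own generated up-set) and they generate all of $L$. So $r=s$ and, after reindexing, $\lm(f_i)=\lm(g_i)$ for all $i$. Then $f_i-g_i\in I$, and if it were nonzero its leading monomial—which does not equal $\lm(f_i)$, since the $\lc$'s are both $1$ and those terms cancel—would be divisible by some $\lm(f_j)=\lm(g_j)$; but $\lm(f_i-g_i)\in\supp(f_i)\cup\supp(g_i)$, contradicting reducedness of $G$ or $H$. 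Hence $f_i=g_i$.

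\medskip\noindent
I do not expect a genuine obstacle here: every ingredient is classical once Dickson's lemma and the well-ordering of $\prec$ are in place, and the excerpt has already done the real work of checking that \eqref{monmult} does not disturb leading-monomial bookkeeping. The one point deserving a sentence of care is the termination of the division algorithm, i.e.\ that a term order on $M\cong\NN^{2n}$ is automatically a well-order; this follows from Dickson's lemma (any nonempty subset has a $\leq$-minimal element, which by compatibility \eqref{tomult} and $1\prec m$ is $\prec$-least among the monomials it divides, and a short argument upgrades this to the full descending chain condition for $\prec$). If one prefers to avoid that subtlety, it can be sidestepped by carrying out all reductions only against the fixed finite set $G$ and invoking Dickson's lemma directly on the sequence of leading monomials produced.
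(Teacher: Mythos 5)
Your proof is correct, and it supplies exactly the standard commutative-style argument (Dickson's lemma, well-ordering of the term order, division algorithm, auto-reduction, and cancellation of leading terms for uniqueness) that the paper invokes without proof when it states that the commutative theory "carries over almost verbatim"; the only non-commutative point to check is that the correction terms in the monomial product never affect leading monomials, which you note. No gaps.
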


We briefly recall \S2.9 in \cite{CLO}.

\begin{Definition}
For a finite subset $G = \{g_1, \dots, g_m\}\subset A$, we say that $f\in A$ reduces to zero modulo $G$ (denoted $f\longrightarrow_G 0$) if
$$
f = a_1 g_1 + \cdots + a_m g_m
$$
for $a_1, \dots, a_m\in A$ and $\lm(a_i g_i) \prec \lm(f)$ for every $i = 1, \dots, m$ with $a_i g_i\neq 0$. 
\end{Definition}

The $S$-polynomial of $f, g\in A\setminus\{0\}$ is 
$$
S(f, g) := \lc(g) m_f(g) f - \lc(f) m_g(f) g, 
$$
where $m_p(q)$ is the monomial given by the least common multiple of $\lm(p)$ and $\lm(q)$ divided by
$\lm(p)$ for $p, q\in A\setminus\{0\}$. In particular, 
$\lt(\lc(g) m_f(g) f ) = \lt(\lc(f) m_g(f) g)$.

\begin{Theorem}
A finite subset $G\subset A$ is a Gr\"obner basis if and only if
$S(f, g) \longrightarrow_G 0$ for every $f, g\in G$.
\end{Theorem}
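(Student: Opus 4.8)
The plan is to adapt the classical proof of Buchberger's criterion (as in \cite[\S2.6, \S2.9]{CLO}), checking that each step survives the passage to the non-commutative algebra $A$. The single fact that makes this work is the monomial multiplication rule \eqref{monmult}: a product of two monomials of $A$ is the commutative-product monomial plus strictly lower-order terms, so $\lt(pq)=\lt(p)\lt(q)$ and leading monomials multiply exactly as commutative monomials. I will also use that a term order $\prec$ well-orders $M$ (Dickson's lemma), which licenses arguments by choosing a representation minimal in its largest monomial.

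For the forward implication I would argue as follows. Suppose $G$ is a Gr\"obner basis and $f,g\in G$. Dividing $S(f,g)\in AG$ by $G$ with the division algorithm gives $S(f,g)=\sum_i a_ig_i+r$ with $\lm(a_ig_i)\preceq\lm(S(f,g))$ whenever $a_ig_i\neq0$ and with no monomial in $\supp(r)$ divisible by any $\lm(d)$, $d\in G$; as $r\in AG$ this forces $r=0$. To turn the non-strict bound $\preceq$ into the strict bound required by $\longrightarrow_G$, one regroups the summands whose leading monomial equals $\lm(S(f,g))$ exactly as in the cancellation lemma used below. This yields $S(f,g)\longrightarrow_G 0$.

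For the converse I would take $0\neq h\in AG$ and, among all expressions $h=\sum_{i=1}^m h_ig_i$ with $h_i\in A$, fix one minimizing $\delta:=\max\{\lm(h_ig_i)\mid h_ig_i\neq0\}$. Always $\lm(h)\preceq\delta$; if equality holds then $\delta=\lm(h_ig_i)=\lm(h_i)\lm(g_i)$ for some $i$, so $\lm(g_i)\mid\lm(h)$ and $G$ is a Gr\"obner basis. Assume for contradiction that $\lm(h)\precneq\delta$, and put $S=\{i:\lm(h_ig_i)=\delta\}$. Peeling off lower-order parts, $\sum_{i\in S}\lt(h_i)g_i=h-e$ where every monomial of $e$ is $\precneq\delta$; hence $\sum_{i\in S}\lt(h_i)g_i$ has leading monomial $\precneq\delta$ although each of its summands has leading monomial exactly $\delta$, so these leading coefficients cancel. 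Enumerating $S=\{s_1,\dots,s_t\}$, the telescoping/cancellation lemma rewrites $\sum_{i\in S}\lt(h_i)g_i$ as a $k$-linear combination of consecutive differences of the normalized terms $\lc(\lt(h_{s_k})g_{s_k})^{-1}\lt(h_{s_k})g_{s_k}$. Since $\lm(g_{s_k})$ and $\lm(g_{s_{k+1}})$ both divide $\delta$, their least common multiple $\gamma_k$ divides $\delta$, and each such difference equals a scalar multiple of $(\delta/\gamma_k)\,S(g_{s_k},g_{s_{k+1}})$ modulo terms of leading monomial $\precneq\delta$ — this is where \eqref{monmult} is invoked again, to see that $(\delta/\gamma_k)\cdot(\text{monomial}\cdot g)$ coincides with $(\text{product monomial})\cdot g$ up to lower-order terms. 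Substituting $S(g_{s_k},g_{s_{k+1}})\longrightarrow_G 0$ then re-expresses $h$ as $\sum_i h_i'g_i$ with every $\lm(h_i'g_i)\precneq\delta$, contradicting minimality of $\delta$.

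I expect the main obstacle to be precisely the bookkeeping in the last paragraph: in the commutative cancellation lemma one freely uses $x^a\cdot(x^bg)=x^{a+b}g$, whereas in $A$ this holds only modulo terms with strictly smaller leading monomial. The thing to verify carefully is that every such error term has the form (lower-order operator)$\cdot g_i$ and therefore gets absorbed into the rebuilt representation of $h$ without increasing $\delta$; granting this, the classical argument transfers verbatim.
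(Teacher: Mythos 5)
The paper offers no proof of this theorem: it is stated bare, justified only by the earlier remark that ``the commutative theory carries over almost verbatim'' together with the pointer to \S 2.9 of \cite{CLO}. Your proposal is precisely the argument the paper is implicitly invoking --- the standard Buchberger proof via a representation of $h\in AG$ minimizing $\delta=\max\lm(h_ig_i)$, the cancellation/telescoping lemma, and the substitution of standard representations of the $S$-polynomials --- and you correctly isolate the one point where non-commutativity intervenes: by \eqref{monmult} a product of monomials is the commutative-product monomial plus terms of strictly smaller leading monomial, so every error term is a left multiple of some $g_i$ with leading monomial $\precneq\delta$ and is absorbed without raising $\delta$. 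That identification is exactly what makes the transfer legitimate, so the substance of your proof is correct.

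One sentence of your write-up is confused and should be repaired. You read the paper's definition of $f\longrightarrow_G 0$ as requiring the \emph{strict} bound $\lm(a_ig_i)\precneq\lm(f)$ and propose to ``regroup'' the summands of leading monomial $\lm(S(f,g))$ to achieve it. No regrouping can achieve this: if every summand satisfied $\lm(a_ig_i)\precneq\lm(f)$ then $\lm(f)\preceq\max_i\lm(a_ig_i)\precneq\lm(f)$, so only $f=0$ could ever reduce to zero and the theorem would be false. The $\prec$ in the paper's definition is a typo for $\preceq$ (compare Definition~2 of \S 2.9 in \cite{CLO}); with the non-strict bound the forward direction is exactly your first step (division by $G$, remainder forced to vanish because it lies in $AG$ and is irreducible), and the converse needs nothing beyond what you wrote. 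Delete the ``regrouping'' sentence, state the definition with $\preceq$, and the proof is complete.
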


The following simple example illustrates a significant difference from 
commutative Gr\"obner bases.

\begin{Example}\label{Example:bad}
The subset $S = \{\partial, x\}\subset A_1(k)$ is not a Gr\"obner basis, since
$\partial x - x\partial = 1\in A_1(k) S$ and $x\nleq 1, \partial \nleq 1$. 
\end{Example}

The monomials $x$ and $\partial$ in Example \ref{Example:bad} are relatively prime, but
they do not commute. A remedy for this situation is given by the following result.

\begin{Theorem}\label{Theorem:GBlmprime}
A set $G = \{f_1, \dots, f_r\}\subset A$ of 
commuting differential operators
i.e.
$f_i f_j = f_j f_i$ for $i, j = 1, \dots, r$ is
a Gr\"obner basis if their leading monomials,
$\lm(f_i)$ and $\lm(f_j)$ are relatively prime for $i\neq j$. 
\end{Theorem}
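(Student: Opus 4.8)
The plan is to apply the $S$-polynomial criterion stated just above: it suffices to prove that $S(f_i,f_j)\longrightarrow_G 0$ for every pair $i\neq j$. Fix such a pair and set $u_i=\lm(f_i)$ and $u_j=\lm(f_j)$. Since $u_i$ and $u_j$ are relatively prime, their least common multiple is the product $u_iu_j$, so $m_{f_i}(f_j)=u_j$ and $m_{f_j}(f_i)=u_i$ in the notation of the definition of the $S$-polynomial, and therefore
$$
S(f_i,f_j)=\lc(f_j)\,u_j\,f_i-\lc(f_i)\,u_i\,f_j=\lt(f_j)\,f_i-\lt(f_i)\,f_j .
$$

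Next I would bring in the commuting hypothesis. Write $g_\ell=f_\ell-\lt(f_\ell)$, so that either $g_\ell=0$ or $\lm(g_\ell)\prec\lm(f_\ell)$. Substituting $\lt(f_j)=f_j-g_j$ and $\lt(f_i)=f_i-g_i$ and using $f_if_j=f_jf_i$ gives
$$
S(f_i,f_j)=(f_j-g_j)f_i-(f_i-g_i)f_j=(f_jf_i-f_if_j)+g_if_j-g_jf_i=g_if_j-g_jf_i .
$$
This identity is exactly where commutativity enters: without it one is left with the extra summand $f_jf_i-f_if_j$, whose leading monomial may dominate that of $S(f_i,f_j)$, which is what goes wrong for $\partial$ and $x$ in Example \ref{Example:bad}.

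It then remains to check that $S(f_i,f_j)=g_if_j-g_jf_i$ is a standard representation with respect to $G$, i.e.\ that the leading monomials of $g_if_j$ and of $g_jf_i$ do not exceed $\lm(S(f_i,f_j))$. From the term-order axiom \eqref{tomult} and the multiplication rule \eqref{monmult} one gets $\lm(mh)=m\,\lm(h)$ for every monomial $m$ and nonzero $h$ (products of monomials being read as sums of exponent vectors), whence $\lm(g_if_j)=\lm(g_i)\,u_j$ and $\lm(g_jf_i)=\lm(g_j)\,u_i$, both strictly below $u_iu_j$. The key point — and here relative primality is used a second time — is that these two monomials are distinct: if $\lm(g_i)\,u_j=\lm(g_j)\,u_i$, then $u_i$ divides $\lm(g_i)\,u_j$, hence $u_i$ divides $\lm(g_i)$ because $\gcd(u_i,u_j)=1$, forcing $u_i\preceq\lm(g_i)$, contrary to $\lm(g_i)\prec u_i$. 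Consequently $\lm(S(f_i,f_j))=\max\bigl(\lm(g_i)\,u_j,\,\lm(g_j)\,u_i\bigr)$, so neither $g_if_j$ nor $g_jf_i$ has a leading monomial exceeding that of $S(f_i,f_j)$; taking the coefficient of $f_j$ to be $g_i$, that of $f_i$ to be $-g_j$, and all other coefficients $0$, we obtain $S(f_i,f_j)\longrightarrow_G 0$. The $S$-polynomial criterion then shows that $G$ is a Gr\"obner basis.

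The step I expect to be the crux is the identity $S(f_i,f_j)=g_if_j-g_jf_i$: it rests on first using relative primality of the leading monomials to collapse the $S$-polynomial to $\lt(f_j)f_i-\lt(f_i)f_j$ (so that $m_{f_i}(f_j)=\lm(f_j)$), and then using commutativity to annihilate the commutator $f_jf_i-f_if_j$. After that, the familiar reduction-to-zero argument from commutative Gr\"obner basis theory transfers essentially verbatim, and the degree bookkeeping above is routine.
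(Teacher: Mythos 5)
Your proof is correct and is precisely the argument the paper invokes: it cites Proposition 4 of \S 2.9 in \cite{CLO} and notes that the only inputs needed are $f_if_j=f_jf_i$ (to kill the commutator term you isolate) and relative primality of the leading monomials (to collapse the $S$-polynomial and to rule out cancellation between $\lm(g_i)\lm(f_j)$ and $\lm(g_j)\lm(f_i)$). You have simply written out the details that the paper leaves to the reference, so there is nothing to add.
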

\begin{proof}
The proof of Proposition 4 of  \S2.9 in \cite{CLO} carries over
verbatim: to show that  $S(f_i, f_j)\longrightarrow_G 0$ one
only needs $f_i f_j = f_j f_i$ along with 
$\lm(f_i)$ and $\lm(f_j)$ being relatively prime.
\end{proof}

\subsection{From the graph to the left ideal}

\begin{Lemma}\label{Lemma:EndSur}
Let $k$ be an arbitrary field and $A = A_n(k)$. A surjective
endomorphism $f: A \rightarrow A$ is an automorphism.
\end{Lemma}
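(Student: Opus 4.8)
The plan is to prove that a surjective endomorphism $f\colon A\to A$ of $A=A_n(k)$ is automatically injective, hence an automorphism. Since $f$ is surjective, it suffices to show $\Ker(f)=0$. The kernel is a two-sided ideal of $A$, and the Weyl algebra $A_n(k)$ in characteristic zero is simple, so $\Ker(f)$ is either $0$ or all of $A$; the latter is impossible since $f(1)=1\neq 0$. Thus in characteristic zero the statement is immediate, and the real content of the lemma is the positive-characteristic case, where $A_n(k)$ is no longer simple.

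For $\kar k = p > 0$, the plan is to reduce to the center. Recall that $Z:=Z(A_n(k))$ is the polynomial ring $k[x_1^p,\dots,x_n^p,\partial_1^p,\dots,\partial_n^p]$, that $A_n(k)$ is a free $Z$-module of finite rank $p^{2n}$ with basis $\{x^\alpha\partial^\beta \mid 0\le \alpha_i,\beta_i\le p-1\}$, and that any $k$-algebra endomorphism $f$ maps $Z$ into $Z$: indeed $f(x_i)^p - f(x_i) = f(x_i^p - x_i)$ and more directly one checks $f$ preserves the center because $f(z)$ commutes with everything in the image $f(A)=A$. So $f$ restricts to a $k$-algebra endomorphism $f_Z\colon Z\to Z$ of a polynomial ring in $2n$ variables, and surjectivity of $f$ forces $f_Z$ to be surjective as well (the image $f(A)$ is generated over $f(Z)$ by the images of the finite basis, and if this is all of $A$, a rank/finiteness argument over the Noetherian ring $Z$ shows $f(Z)=Z$). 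A surjective endomorphism of a finitely generated commutative domain is an isomorphism — this is a standard fact (e.g.\ via Noetherianity: the ascending chain of kernels of the iterates $f_Z^m$ stabilizes, giving injectivity). Hence $f_Z$ is an automorphism of $Z$.

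Now I would finish by a module-theoretic rank argument over $Z$. Regard $A$ as a module over itself via the twisted structure coming from $f$, i.e.\ consider $A$ as a $Z$-module where $z\in Z$ acts by multiplication by $f_Z(z)$; call it $A^{f}$ as a $Z$-module. Since $f_Z$ is an automorphism of $Z$, $A^{f}$ is again free over $Z$ of rank $p^{2n}$. The map $f\colon A\to A$ is then a surjective homomorphism of modules over $Z$ (using $f_Z$ to identify the two $Z$-actions), i.e.\ a surjection between two free $Z$-modules of the same finite rank $p^{2n}$. Over the Noetherian (indeed, integral domain) ring $Z$, a surjective endomorphism of a finitely generated free module is an isomorphism — one way to see this is to pass to the fraction field or to a localization/residue field where surjectivity of a square matrix forces invertibility of its determinant, hence injectivity. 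Therefore $f$ is injective, and combined with surjectivity it is an automorphism.

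The main obstacle is organizing the descent to the center cleanly: one must verify (i) that $f$ genuinely preserves $Z$ and is surjective onto $Z$ — which uses that $f(A)=A$ together with the freeness of $A$ over $Z$ — and (ii) that surjectivity of $f$ as a $k$-algebra map is equivalent to, or at least implies, surjectivity as a map of free rank-$p^{2n}$ $Z$-modules once the base $Z$ is twisted by $f_Z$. Both are elementary commutative-algebra facts about finitely generated modules and algebras over a Noetherian domain, but they need to be stated in the right order so that the final ``surjective endomorphism of a finite free module over a domain is injective'' step applies. The characteristic-zero case, by contrast, is a one-line consequence of simplicity of $A_n(k)$.
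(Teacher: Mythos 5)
Your overall strategy coincides with the paper's: characteristic zero is immediate from simplicity of $A_n(k)$, and in characteristic $p$ one reduces to the center $C=k[x_1^p,\dots,\partial_n^p]$, shows the induced endomorphism $\overline f$ of $C$ is surjective, concludes it is an automorphism by the standard Noetherian argument, and then deduces injectivity of $f$ from freeness of $A$ over $C$. Your final step (a surjection between free $C$-modules of equal finite rank over a domain is injective) and your observation that surjectivity of $f$ forces $f(C)\subseteq C$ are both fine.

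The one place where your argument is genuinely incomplete is the pivotal claim that surjectivity of $f$ implies surjectivity of $\overline f\colon C\to C$. Your justification --- $A=f(A)$ is generated over $f(C)$ by the $p^{2n}$ images of the basis monomials, so ``a rank/finiteness argument shows $f(C)=C$'' --- does not suffice as stated. What that finiteness gives directly is only that $C$ is a finitely generated $f(C)$-module, i.e.\ that $C$ is \emph{integral} over $f(C)$; integral extensions of domains can be proper (e.g.\ $k[t^2,t^3]\subset k[t]$), so no equality follows yet. To close the gap along your lines you would need two further steps: first a dimension count ($\dim f(C)=\dim C=2n$ forces $\ker\overline f=0$, so $f(C)\cong C$ is normal), and second a generic-rank comparison ($A$ is free of rank $p^{2n}$ over $C$ but generated by $p^{2n}$ elements over $f(C)$, which forces $\operatorname{Frac}(f(C))=\operatorname{Frac}(C)$); only then does normality of $f(C)$ plus integrality yield $f(C)=C$. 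This is all doable, and it would make your proof independent of outside input, but none of it is in your write-up. The paper takes a different shortcut here: it invokes the Nousiainen lemma, namely that the monomials in $f(x_1),\dots,f(\partial_n)$ with exponents below $p$ form a \emph{second} $C$-basis of $A$, so that expanding an arbitrary element of $A=f(A)$ in this basis and comparing coefficients immediately exhibits every element of $C$ as lying in $f(C)$. Either route works, but as written yours has a hole exactly where the paper leans on that cited result.
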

\begin{proof}
If $k$ is a field of characteristic zero, $A$ is a simple ring. Therefore $f$
is injective in this case.
Let $k$ be a field of positive
 characteristic $p>0$. Then the center of $A$ is
the commutative polynomial ring
$$
C = k[x_1^p, \dots, x_n^p, \partial_1^p, \dots, \partial_n^p]
$$
in $2n$ variables 
and $A$ is a free module over $C$ with the two bases 
(cf.~\S3.2 and \S3.3 in \cite{Tsuchimoto2} and the \emph{Nousiainen lemma} in \cite{ML})
\begin{align}
\begin{split}\label{twobases}
&\{x_1^{i_1}\cdots x_n^{i_n} \partial_1^{j_1} \cdots \partial_n^{j_n} \mid
0\leq i_1, \dots, i_n, j_1, \dots, j_n \leq p-1\}\\
&\{f(x_1)^{i_1}\cdots f(x_n)^{i_n} f(\partial_1)^{j_1} \cdots f(\partial_n)^{j_n} \mid
0\leq i_1, \dots, i_n, j_1, \dots, j_n \leq p-1\}.
\end{split}
\end{align}
The endomorphism $f$ restricts to an endomorphism
$\overline{f}: C\rightarrow C$, since $f(C) \subset C$. Applying $f$ in terms
of the two bases in \eqref{twobases} if follows that $\overline{f}$
is surjective, since $f$ is surjective. Therefore $\overline{f}$ and hence
$f$ is an automorphism.
\end{proof}

\begin{Remark}
Let $k$ be a field of positive characteristic.
An endomorphism of the first Weyl algebra $A_1(k)$ is always injective. 
Surprisingly, endomorhisms
of $A_n(k)$  fail to be injective for $n\geq 2$ (cf.~\cite{ML}).
\end{Remark}

\begin{Theorem}\label{Theorem:Ae}
Let $k$ be an arbitrary field and $A = A_n(k)$. For an endomorphism
$$
f: A \rightarrow A,
$$
we define the left ideal
\begin{align*}
J = (&1\otimes x_1 - f(x_1)\otimes 1, \dots, 1\otimes x_n - f(x_n)\otimes 1, \\
&1\otimes \partial_1 - f(\partial_1)\otimes 1, \dots, 1\otimes \partial_n - f(\partial_n)\otimes 1)\subset A^e. 
\end{align*}
This left ideal equals the annihilator $\Ann_{A^e}1_{A^f}$.
\begin{enumerate}[(i)]
\item\label{idn1}
If $f$ is invertible, then
\begin{align*}
J = (&x_1\otimes 1 - 1\otimes f^{-1}(x_1), \dots, x_n\otimes 1 - 1\otimes f^{-1}(x_n), \\
&\partial_1\otimes 1 - 1\otimes f^{-1}(\partial_1), \dots, \partial_n\otimes 1 - 1\otimes f^{-1}(\partial_n))\subset A^e. 
\end{align*}
\item\label{idn2}
If 
\begin{align*}
J = (&x_1\otimes 1 - 1\otimes q_1, \dots, x_n\otimes 1 - 1\otimes q_n, \\
&\partial_1\otimes 1 - 1\otimes p_1, \dots, \partial_n\otimes 1 - 1\otimes p_n)\subset A^e, 
\end{align*}
then $f$ is invertible with 
\begin{align*}
f^{-1}(x_1) &= q_1\\
&\vdots\\
f^{-1}(x_n) &= q_n\\
f^{-1}(\partial_1) &= p_1\\
&\vdots\\
f^{-1}(\partial_n) &= p_n
\end{align*}
\end{enumerate}
\end{Theorem}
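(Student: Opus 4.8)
The plan is to prove the displayed identity $J=\Ann_{A^e}1_{A^f}$ first, after which (i) and (ii) are short. Writing $A^e=A_n(k)\otimes_k A_n(k)^{\op}$, the bimodule $A^f$ is $A$ with $a\otimes b$ acting by $m\mapsto amf(b)$, and $1$ generates it already over $A\otimes 1$; hence $\psi\colon A^e\to A^f=A$, $u\mapsto u\cdot 1_{A^f}$, is a surjection of left $A^e$-modules with $\psi(a\otimes 1)=a$ and $\ker\psi=\Ann_{A^e}1_{A^f}$. A one-line check on generators gives $J\subseteq\ker\psi$, so $\psi$ descends to $\bar\psi\colon A^e/J\to A$.

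The crux is the straightening relation $a\otimes b\equiv af(b)\otimes 1\pmod J$ for $a,b\in A$, which I would prove by induction on the length of a word $b=c_1\cdots c_m$ in the generators $x_1,\dots,x_n,\partial_1,\dots,\partial_n$ (the general case then following by $k$-linearity). Writing $a\otimes b=(a\otimes 1)(1\otimes c_m)\cdots(1\otimes c_1)$, one replaces the rightmost factor $1\otimes c_1$ by $f(c_1)\otimes 1$ modulo $J$ --- legitimate because $J$ is a \emph{left} ideal and $1\otimes c_1-f(c_1)\otimes 1$ is one of its generators --- slides $f(c_1)\otimes 1\in A\otimes 1$ leftward past the mutually commuting factors from $1\otimes A_n(k)^{\op}$, collapses $(1\otimes c_m)\cdots(1\otimes c_2)$ to $1\otimes(c_2\cdots c_m)$, and applies the inductive hypothesis to $c_2\cdots c_m$ together with $f(c_1)f(c_2\cdots c_m)=f(b)$. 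The relation shows $A^e/J$ is spanned over $k$ by the classes of the monomials $x^\alpha\partial^\beta\otimes 1$; since $\bar\psi$ carries these bijectively onto the basis $\{x^\alpha\partial^\beta\}$ of $A$ (Proposition \ref{Proposition:wbasis}) and a vector-space surjection mapping a spanning set onto a basis is an isomorphism, $J=\ker\psi=\Ann_{A^e}1_{A^f}$.

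For (i), with $f$ invertible, set $J'=(x_i\otimes 1-1\otimes f^{-1}(x_i),\ \partial_i\otimes 1-1\otimes f^{-1}(\partial_i)\mid i=1,\dots,n)$. Each generator of $J'$ kills $1_{A^f}$ (for instance $x_i-f(f^{-1}(x_i))=0$), so $J'\subseteq\Ann_{A^e}1_{A^f}=J$. For the reverse inclusion I would run the mirror straightening argument in $A^e/J'$: induction on word length gives $a\otimes 1\equiv 1\otimes f^{-1}(a)\pmod{J'}$ for all $a\in A$ (peeling the rightmost factor $c\otimes 1$ of $a\otimes 1$ and using that $f^{-1}$ is a homomorphism); taking $a=f(x_i)$ and $a=f(\partial_i)$ then puts the generators $1\otimes x_i-f(x_i)\otimes 1$ and $1\otimes\partial_i-f(\partial_i)\otimes 1$ of $J$ into $J'$, whence $J=J'$. (Alternatively, $J=\Ann_{A^e}1_{{}^{f^{-1}}A}$ by Proposition \ref{Proposition:graphinv}(d), and one applies the mirror version of the first part to the dual graph ${}^{f^{-1}}A$.) For (ii), the hypothesis places $x_i\otimes 1-1\otimes q_i$ and $\partial_i\otimes 1-1\otimes p_i$ in $J=\Ann_{A^e}1_{A^f}$; evaluating at $1_{A^f}$ gives $f(q_i)=x_i$ and $f(p_i)=\partial_i$ for all $i$. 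Hence $f(A)$ is a subalgebra of $A$ containing every generator, so $f$ is surjective, and therefore an automorphism by Lemma \ref{Lemma:EndSur}; applying $f^{-1}$ to $f(q_i)=x_i$ and $f(p_i)=\partial_i$ gives $q_i=f^{-1}(x_i)$ and $p_i=f^{-1}(\partial_i)$.

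I expect the one genuinely fiddly point to be the bookkeeping in $A^e=A_n(k)\otimes_k A_n(k)^{\op}$ underlying the two straightening inductions: one must keep track of the order reversal in the $A_n(k)^{\op}$-factor, peel from the correct end of each monomial, and use that $J$ and $J'$ are only left ideals (so a defining relation may be applied only after multiplying on the left). The remaining steps are formal.
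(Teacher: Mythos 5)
Your proposal is correct and follows essentially the same route as the paper: the inductive straightening $a\otimes b\equiv af(b)\otimes 1\pmod J$ is exactly the paper's identity $f(ab)\otimes 1-1\otimes ab=(f(a)\otimes 1)(f(b)\otimes 1-1\otimes b)+(1\otimes b)(f(a)\otimes 1-1\otimes a)$ applied inductively, part (i) uses the mirror computation together with Proposition \ref{Proposition:graphinv}(\ref{graph:ann}), and part (ii) uses surjectivity plus Lemma \ref{Lemma:EndSur}, just as in the paper. The only cosmetic difference is that you finish the identification $J=\Ann_{A^e}1_{A^f}$ by mapping a spanning set of $A^e/J$ bijectively onto the monomial basis of $A$, whereas the paper directly rewrites an arbitrary annihilator element $\sum a_i\otimes b_i$ with $\sum a_if(b_i)=0$ as a left combination of the generators; both are valid.
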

\begin{proof}
Let us prove that
\begin{align}
\begin{split}\label{identideal}
\Ann_{A^e} 1_{A^f} = (&f(x_1)\otimes 1 - 1 \otimes x_1, \dots, 
f(x_n) \otimes 1 - 1 \otimes x_n, \\
&f(\partial_1)\otimes 1 - 1 \otimes \partial_1, \dots, 
f(\partial_n) \otimes 1 - 1 \otimes \partial_n).
\end{split}
\end{align}
If $a_1 f(b_1) + \cdots + a_m f(b_m) = 0$, for $a_1, b_1, \dots, a_m, b_m\in A$, then
$$
a_1\otimes b_1 + \cdots + a_m\otimes b_m = 
-(a_1\otimes 1)(f(b_1)\otimes 1 - 1 \otimes b_1) - \cdots - (a_m\otimes 1)(f(b_m)\otimes 1 - 1 \otimes b_m). 
$$
This shows that $\Ann_{A^e} 1_{A^f}$ is generated by $\{f(a) \otimes 1 - 1 \otimes a\mid a\in A\}$. The identity
\begin{align*}
f(a b) \otimes 1 - 1 \otimes (a b) &= f(a) f(b) \otimes 1 - 1 \otimes (a b)\\
&= (f(a) \otimes 1)(f(b)\otimes 1 - 1 \otimes b) + (1\otimes b) (f(a) \otimes 1 - 1 \otimes a)
\end{align*}
is verified using $A^e = A\otimes_k A^\op$ and shows \eqref{identideal} inductively as $x_1, \dots, x_n, \partial_1, \dots, \partial_n$ generate $A$.
An analogous argument gives
\begin{align*}
\Ann_{A^e} 1_{{}^{f^{-1}}\kern-4pt A} = (&x_1\otimes 1 - 1 \otimes f^{-1}(x_1), \dots, 
x_n \otimes 1 - 1 \otimes f^{-1}(x_n), \\
&\partial_1\otimes 1 - 1 \otimes f^{-1}(\partial_1), \dots, 
\partial_n \otimes 1 - 1 \otimes f^{-1}(\partial_n)).
\end{align*}
Therefore \eqref{idn1} is a consequence of \eqref{graph:ann} in Proposition \ref{Proposition:graphinv}. 

Suppose now that 
\begin{align*}
J = \Ann_{A^e} 1_{A^f} =(&x_1\otimes 1 - 1\otimes q_1, \dots, x_n\otimes 1 - 1\otimes q_n, \\
&\partial_1\otimes 1 - 1\otimes p_1, \dots, \partial_n\otimes 1 - 1\otimes p_n)\subset A^e.
\end{align*}
Then 
\begin{align*}
x_1 &= f(q_1)\\
&\vdots\\
x_n &= f(q_n)\\
\partial_1 &= f(p_1)\\
&\vdots\\
\partial_n &= f(p_n)
\end{align*}
 and \eqref{idn2} follows from Lemma \ref{Lemma:EndSur}.
\end{proof}

\subsection{The Gr\"obner basis in $A_{2n}(k)$}

We may translate the results of Theorem \ref{Theorem:Ae} into statements
about Gr\"obner basis in $A_{2n}(k)\cong A^e$ using the isomorphisms in
\eqref{double} and \eqref{transp}. This involves viewing $f$ as an
endomorphism of the Weyl algebra in the variables $x_1, \dots, x_n, 
\partial_1, \dots, \partial_n$ and the potential inverse as an endomorphism 
of the Weyl algebra in the
variables $x_{n+1}, \dots, x_{2n}, \partial_{n+1}, \dots, \partial_{2n}$.

\begin{Theorem}\label{Theorem:A2n}
Let $k$ be an arbitrary field and $A = A_n(k)$. For an endomorphism
$$
f: A \rightarrow A,
$$
we define the left ideal
\begin{equation}\label{GBideal}
J = (x_{n+1} - f(x_1), \dots, x_{2n} - f(x_n),
\partial_{n+1} + f(\partial_1), \dots, \partial_{2n} + f(\partial_n))
\end{equation}
in $A_{2n}(k)$. 
\begin{enumerate}[(i)]
\item\label{id1}
If $f$ is invertible, then
$$
J = (x_1 - \tau f^{-1}(x_{n+1}), \dots, x_n - \tau f^{-1}(x_{2n}), 
\partial_1 - \tau f^{-1}(\partial_{n+1}), \dots, \partial_n - \tau  f^{-1}(\partial_{2n})),
$$
where $\tau$ is the antihomomorphism given by $\tau(x_{n+i}) = x_{n+i}$ and
$\tau(\partial_{n+i}) = - \partial_{n+i}$.

\item\label{id2}
If
\begin{equation}\label{GBlook}
J = (x_1- q_1, \dots, x_n- q_n, 
\partial_1 - p_1, \dots, \partial_n - p_n)
\end{equation}
with $p_1, \dots, p_n, q_1, \dots, q_n$ in the subalgebra generated by
$x_{n+1}, \dots, x_{2n}, \partial_{n+1}, \dots, \partial_{2n}$,
then $f$ is invertible with 
\begin{align*}
f^{-1}(x_{n+1}) &= \tau q_1\\
&\vdots\\
f^{-1}(x_{2n}) &= \tau q_n\\
f^{-1}(\partial_{n+1}) &= \tau p_1\\
&\vdots\\
f^{-1}(\partial_{2n}) &= \tau p_n.
\end{align*}
\item
Let $\prec$ denote the lexicographic term order on monomials in $A_{2n}(k)$ given by 
\begin{equation}\label{lex}
x_1 \succ \cdots \succ x_n \succ \partial_1 \succ \cdots \succ \partial_n\succ x_{n+1} \succ \cdots
\succ x_{2n} \succ \partial_{n+1}\succ \cdots \succ \partial_{2n}.
\end{equation}
Then $f$ is invertible if and only if the reduced Gr\"obner basis of $J$ has the form
\begin{equation*}
J = (x_1- q_1, \dots, x_n- q_n, 
\partial_1 + p_1, \dots, \partial_n + p_n)
\end{equation*}
with $p_1, \dots, p_n, q_1, \dots, q_n$ in the subalgebra generated by
$x_{n+1}, \dots, x_{2n}, \partial_{n+1}, \dots, \partial_{2n}$,
\end{enumerate}
\end{Theorem}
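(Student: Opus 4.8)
The plan is to leverage Theorem~\ref{Theorem:A2n}\eqref{id1} and \eqref{id2} together with Theorem~\ref{Theorem:GBlmprime} to produce an explicit Gr\"obner basis when $f$ is invertible, and then argue the converse by recognizing the reduced Gr\"obner basis from its shape. First I would note that the generators of $J$ in \eqref{GBideal}, namely $g_i = x_{n+i} - f(x_i)$ and $h_i = \partial_{n+i} + f(\partial_i)$, are $2n$ elements of $A_{2n}(k)$. The key observation is that these operators pairwise commute: indeed $x_{n+i} - f(x_i)$ and $x_{n+j} - f(x_j)$ commute because $x_{n+i}, x_{n+j}$ commute with each other and with everything in the subalgebra generated by $x_1, \dots, \partial_n$ (where the $f(x_\ell), f(\partial_\ell)$ live), and $[f(x_i), f(x_j)] = f([x_i,x_j]) = 0$; the analogous checks handle the other pairs, the crucial one being $[x_{n+i} - f(x_i), \partial_{n+j} + f(\partial_j)] = -[f(x_i), f(\partial_j)] = -f(\delta_{ij}) = 0$ (the $\delta_{ij}$ from $\partial_{n+j}$ and $x_{n+i}$ cancelling against the one from $f$). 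So $\{g_1, \dots, g_n, h_1, \dots, h_n\}$ is a commuting family.

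Next I would pass to the inverse side. If $f$ is invertible, then by \eqref{id1} the same left ideal $J$ is generated by the $2n$ commuting operators $g_i' = x_i - \tau f^{-1}(x_{n+i})$ and $h_i' = \partial_i - \tau f^{-1}(\partial_{n+i})$, where now $\tau f^{-1}(x_{n+i})$ and $\tau f^{-1}(\partial_{n+i})$ lie in the subalgebra generated by $x_{n+1}, \dots, \partial_{2n}$. With respect to the lexicographic order \eqref{lex}, the leading monomial of $g_i'$ is $x_i$ and that of $h_i'$ is $\partial_i$ (since $x_i, \partial_i$ dominate all the variables $x_{n+1}, \dots, \partial_{2n}$ appearing in the tails), so the $2n$ leading monomials $x_1, \dots, x_n, \partial_1, \dots, \partial_n$ are pairwise relatively prime. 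By Theorem~\ref{Theorem:GBlmprime}, $\{g_1', \dots, g_n', h_1', \dots, h_n'\}$ is a Gr\"obner basis for $J$. It is already reduced: each leading coefficient is $1$, and no leading monomial $x_i$ or $\partial_i$ divides any monomial appearing in another generator (those only involve $x_{n+1}, \dots, \partial_{2n}$). Writing $q_i := \tau f^{-1}(x_{n+i})$ and $p_i := -\tau f^{-1}(\partial_{n+i})$ puts this reduced Gr\"obner basis in exactly the stated form $(x_1 - q_1, \dots, x_n - q_n, \partial_1 + p_1, \dots, \partial_n + p_n)$ with $p_i, q_i$ in the subalgebra generated by $x_{n+1}, \dots, \partial_{2n}$.

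For the converse, suppose the reduced Gr\"obner basis of $J$ has this form. Then in particular
$$
J = (x_1 - q_1, \dots, x_n - q_n, \partial_1 + p_1, \dots, \partial_n + p_n)
$$
with $p_i, q_i$ in the subalgebra generated by $x_{n+1}, \dots, \partial_{2n}$. Applying $\tau$ to the defining $q_i, p_i$ and invoking Theorem~\ref{Theorem:A2n}\eqref{id2} (with $p_i$ there replaced by $-p_i$, matching the sign convention $\partial_1 + p_1$ versus $\partial_1 - p_1$, or simply absorbing the sign into $\tau$) yields that $f$ is invertible. Since reduced Gr\"obner bases are unique, the ``if and only if'' is complete. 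The main obstacle I anticipate is bookkeeping the $\tau$-twist and the $\pm$ signs consistently between the $\partial_{n+i} + f(\partial_i)$ convention in \eqref{GBideal}, the antihomomorphism $\tau$ that swaps the bimodule side to $A_{2n}(k)$, and the final $\partial_i + p_i$ form; none of this is deep, but it must be tracked carefully so that the $q_i, p_i$ genuinely land in the right subalgebra and the leading-monomial coprimality argument applies verbatim.
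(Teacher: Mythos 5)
Your argument for part (iii) is sound and is essentially the proof the paper intends; note that the paper prints no proof of Theorem~\ref{Theorem:A2n} at all --- it is presented as the translation of Theorem~\ref{Theorem:Ae} under the isomorphisms \eqref{double} and \eqref{transp}, with part (iii) left implicit. Two points in your write-up need attention. First, you cite parts \eqref{id1} and \eqref{id2} of the very theorem you are proving; these are not free. They follow from Theorem~\ref{Theorem:Ae} by transporting the left ideal $\Ann_{A^e}1_{A^f}$ along the isomorphism $A^e\cong A_{2n}(k)$ built from \eqref{double} and \eqref{transp}, and that transport is precisely where the antihomomorphism $\tau$ and the sign in $\partial_{n+i}+f(\partial_i)$ (coming from $\tau(\partial_i)=-\partial_i$) enter. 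That translation is the content of (i) and (ii) and should be written out rather than deferred to ``bookkeeping.''

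Second, the commutativity you actually verify is for the generators $g_i,h_i$ of \eqref{GBideal}, but that computation is never used in your proof of (iii): Theorem~\ref{Theorem:GBlmprime} is applied to the family $g_i'=x_i-\tau f^{-1}(x_{n+i})$, $h_i'=\partial_i-\tau f^{-1}(\partial_{n+i})$ coming from part \eqref{id1}, and it is \emph{their} pairwise commutativity that must be checked. It does hold, but the verification is slightly different: since $\tau f^{-1}$ is an antihomomorphism, $[\tau f^{-1}(a),\tau f^{-1}(b)]=-\tau f^{-1}([a,b])$, so $[\tau f^{-1}(x_{n+i}),\tau f^{-1}(\partial_{n+j})]=-\tau f^{-1}(-\delta_{ij})=\delta_{ij}$, which cancels $[x_i,\partial_j]=-\delta_{ij}$. (Your displayed identity $-[f(x_i),f(\partial_j)]=-f(\delta_{ij})=0$ is also garbled --- the correct statement is that this bracket equals $\delta_{ij}$ and cancels $[x_{n+i},\partial_{n+j}]=-\delta_{ij}$, as your parenthetical remark indicates.) With those items supplied, the remainder of your part (iii) argument --- the leading monomials $x_1,\dots,x_n,\partial_1,\dots,\partial_n$ under \eqref{lex} are pairwise relatively prime, the resulting Gr\"obner basis is already reduced because the tails involve only $x_{n+1},\dots,\partial_{2n}$, uniqueness of the reduced Gr\"obner basis gives the forward implication, and part \eqref{id2} gives the converse --- is correct and matches the intended route.
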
 

A perhaps surprising consequence of \eqref{right2} in 
Theorem \ref{Theorem:right} 
is the following result.

\begin{Theorem}\label{Theorem:surpr}
Let $k$ be a field of characteristic zero and
$A_{2n}(k)$ the Weyl algebra in the
variables
$x_1, \dots, x_{2n}, \partial_1, \dots, \partial_{2n}$.
If $p_1, \dots, p_n, q_1, \dots, q_n$ are in the subalgebra
generated by $x_1, \dots, x_n, \partial_1, \dots, \partial_n$,
then the left ideal
\begin{equation}\label{GBideal2}
J = (x_{n+1} - q_1, \dots, x_{2n} - q_n,
\partial_{n+1} + p_1, \dots, \partial_{2n} + p_n)
\end{equation}
is proper if and only if
\begin{align}
\begin{split}\label{pqcomms}
[q_i, q_j] &= 0\\
[p_i, p_j] &= 0\\
[p_i, q_j] &= \delta_{ij}.
\end{split}
\end{align}
\end{Theorem}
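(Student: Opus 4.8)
The plan is to pass, in both directions, through the dictionary between left ideals of $A_{2n}(k)$ and annihilators of cyclic bimodules over $A:=A_n(k)$ supplied by \eqref{double} and \eqref{transp}, and to use part \eqref{right2} of Theorem \ref{Theorem:right} as the decisive ingredient.

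\emph{The direction $\Leftarrow$.} I would first note that the relations \eqref{pqcomms} are exactly the conditions needed for the assignment $x_i\mapsto q_i$, $\partial_i\mapsto p_i$ ($i=1,\dots,n$) to respect the defining relations \eqref{commrules} of $A_n(k)$, hence to extend uniquely to a $k$-algebra endomorphism $f\colon A\to A$ with $f(x_i)=q_i$ and $f(\partial_i)=p_i$. Transporting the identity $\Ann_{A^e}1_{A^f}=(1\otimes x_i-f(x_i)\otimes 1,\ 1\otimes\partial_i-f(\partial_i)\otimes 1)$ of Theorem \ref{Theorem:Ae} to $A_{2n}(k)$ via \eqref{double} and \eqref{transp} exactly as in Theorem \ref{Theorem:A2n}, the ideal $J$ of \eqref{GBideal2} becomes $\Ann_{A_{2n}(k)}1_{A^f}$. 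Since $A^f$ is nonzero as a $k$-vector space and its cyclic generator $1_{A^f}$ is nonzero, $1\notin J$, so $J$ is proper.

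\emph{The direction $\Rightarrow$.} Assume $J$ is proper, so $M:=A_{2n}(k)/J\neq 0$. Viewing $M$ as an $A$-bimodule through \eqref{double} and \eqref{transp} — with $x_1,\dots,x_n,\partial_1,\dots,\partial_n$ acting on the left and $x_{n+1},\dots,\partial_{2n}$ giving the $\tau$-twisted right action — the defining generators of $J$ translate, on $\overline 1:=1+J$, into $\overline 1\cdot x_i=q_i\,\overline 1$ and $\overline 1\cdot\partial_i=p_i\,\overline 1$ for $i=1,\dots,n$; here the sign in $\partial_{n+i}+p_i$ is cancelled by the sign in $\tau(\partial_i)=-\partial_i$. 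Consequently $\overline 1\cdot a\in A\overline 1$ for each generator $a$ of $A$, hence for every $a\in A$, so $M=A_{2n}(k)\overline 1=A\overline 1$ is cyclic as a left $A$-module. By part \eqref{right2} of Theorem \ref{Theorem:right} the surjection $A\to M$, $a\mapsto a\overline 1$, is an isomorphism of left modules (the proof of that statement in fact identifies its kernel as $0$), so $\overline 1$ is a free generator. I can then define $g\colon A\to A$ by $\overline 1\cdot a=g(a)\,\overline 1$; additivity and $k$-linearity are immediate, $g(1)=1$ since $\overline 1\cdot 1=\overline 1=1\cdot\overline 1$, and $g(ab)\overline 1=(\overline 1\cdot a)\cdot b=g(a)(\overline 1\cdot b)=g(a)g(b)\overline 1$ forces $g(ab)=g(a)g(b)$, so $g$ is a $k$-algebra endomorphism of $A$ with $g(x_i)=q_i$ and $g(\partial_i)=p_i$. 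Applying $g$ to $[x_i,x_j]=0$, $[\partial_i,\partial_j]=0$ and $[\partial_i,x_j]=\delta_{ij}$ then yields \eqref{pqcomms}.

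Everything here is formal once the correspondences are in place; the genuine content is Theorem \ref{Theorem:right}\eqref{right2}, which is precisely what upgrades ``$\overline 1$ generates $M$'' to ``$\overline 1$ is free'', so that $g$ is well defined. The step I expect to need the most care is the identification of $M$ as a \emph{left}-cyclic $A$-bimodule — checking that $\overline 1\cdot x_i$ and $\overline 1\cdot\partial_i$ really lie in $A\overline 1$, with the $\tau$-signs bookkept correctly — since the whole argument hinges on being able to invoke \eqref{right2}.
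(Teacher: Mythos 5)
Your proof is correct, and one half of it follows a genuinely different route from the paper's. For the direction ``\eqref{pqcomms} $\Rightarrow$ $J$ proper'', the paper does \emph{not} construct the endomorphism $f$ and invoke the graph: it observes instead that the generators of \eqref{GBideal2} commute pairwise (this is where \eqref{pqcomms} enters) and have relatively prime leading monomials with respect to a suitable elimination order, so by Theorem \ref{Theorem:GBlmprime} they form a reduced Gr\"obner basis, whence $1\notin J$. Your argument --- \eqref{pqcomms} is precisely the condition for $x_i\mapsto q_i$, $\partial_i\mapsto p_i$ to descend from the free algebra through the relations \eqref{commrules} to an endomorphism $f$, after which $J=\Ann_{A_{2n}(k)}1_{A^f}$ is proper because $A^f\neq 0$ --- is a clean substitute that leans on Theorem \ref{Theorem:Ae} (whose proof is an independent direct computation, so there is no circularity) and sidesteps the Gr\"obner machinery entirely; like the paper's version, it works in any characteristic. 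For the converse direction both arguments rest on the same decisive fact, part \eqref{right2} of Theorem \ref{Theorem:right}: the paper computes the commutators of the generators of $J$, notes they land in $J$ and equal $[q_i,q_j]$, $[p_i,p_j]$, $\delta_{ij}-[p_i,q_j]$ up to sign, and kills them using that the free left $A_n(k)$-module structure forbids $J$ from meeting the first-half subalgebra nontrivially; you package the same freeness as the well-definedness of the endomorphism $g$ with $\overline 1\cdot a=g(a)\overline 1$ and then apply $g$ to \eqref{commrules}. These are the same computation in different clothing, and your careful bookkeeping of the $\tau$-signs and of why $M=A\overline 1$ is exactly the point that needs care.
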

\begin{proof}
If $J$ is a proper ideal, then $A_{2n}/J$ can be viewed as a
non-zero bimodule over $A_n(k)$, which is cyclic generated by 
$1\in A_{2n}(k)$ as a left module over $A_n(k)$ identified with
the subalgebra generated by $x_1, \dots, x_n, \partial_1, \dots, 
\partial_n$.  The commutators in 
\eqref{pqcomms} are all elements of the left ideal $J$ except for
$[p_i, q_i]$. Here 
$$
[\partial_{n+i} + p_i, x_{n+i} - q_i] = 1 - [p_i, q_i]\in J.
$$
Part \eqref{right2} of Theorem \ref{Theorem:right} implies that 
$J$ cannot contain elements of positive degree from 
the subalgebra generated by $x_1, \dots, x_n, \partial_1, \dots, 
\partial_n$. Therefore the commutator identities in \eqref{pqcomms}
must hold if $J$ is a proper ideal. 

If the commutator identities in \eqref{pqcomms} hold, then the generators of $J$
in \eqref{GBideal2} form the unique reduced Gr\"obner basis with respect to the
term order in Theorem \eqref{lex}, 
since they are commuting with 
relatively prime leading monomials (cf.~Theorem \ref{Theorem:GBlmprime}).
This shows that $J$ is proper.
\end{proof}

\begin{Remark}
Theorem \ref{Theorem:surpr} fails for a field $k$ of positive characteristic $p>0$. 
Consider the left $A_2(k)$-module given by
$$
M = k[x_1, x_2]/(x_1^p, x_2^p)
$$
as a quotient of the natural left $A_2(k)$-module $k[x_1, x_2]$.
The annihilator of the element $[(x_2 - x_1)^{p-1}]\in M$ contains the left ideal
$$
J = (x_2 - x_1, \partial_2 + \partial_1 + \partial_1^{p+1}) \subsetneq A_2(k),
$$ 
but $[\partial_1 + \partial_1^{p+1}, x_1] = 1 + \partial_1^p$. 
\end{Remark}

\newcommand{\germ}{\mathfrak}

\providecommand{\bysame}{\leavevmode\hbox to3em{\hrulefill}\thinspace}
\providecommand{\MR}{\relax\ifhmode\unskip\space\fi MR }
\providecommand{\MRhref}[2]{%
  \href{http://www.ams.org/mathscinet-getitem?mr=#1}{#2}
}
\providecommand{\href}[2]{#2}



\end{document}